\newcommand{\Cc}{\mathbb{C}} 
\newcommand{\Pp}{\mathbb{P}}
\newcommand{\Rr}{\mathbb{R}}
\newcommand{\Nn}{\mathbb{N}}
\newcommand{\Zz}{\mathbb{Z}}
\newcommand{\Qq}{\mathbb{Q}}
\newcommand{\Ff}{\mathbb{F}}
\newcommand{\Aa}{\mathbb{A}}
\renewcommand {\leq}{\leqslant}
\renewcommand {\geq}{\geqslant}
\renewcommand {\epsilon}{\varepsilon}
\theoremstyle{plain}
\newtheorem{theorem}{Theorem}[section]    
\newtheorem{lemma}[theorem]{Lemma}       
\newtheorem{proposition}[theorem]{Proposition}      
\newtheorem{corollary}[theorem]{Corollary}      
\newtheorem{conjecture}[theorem]{Conjecture} 
\newtheorem*{theorem*}{Theorem}
\newtheorem*{hypothesis*}{Schinzel Hypothesis}
\theoremstyle{remark}
\newtheorem{definition}[theorem]{Definition}      
\newtheorem{question}[theorem]{Question} 
\newtheorem{notation}[theorem]{Notation} 
\newtheorem{remark}[theorem]{Remark}   
\newtheorem{example}[theorem]{Example}
\title[Hurwitz spaces and Inverse Galois Theory]{Hurwitz spaces and Inverse Galois Theory}
\author{Pierre D\`ebes}
\email{pierre.debes@univ-lille.fr}
\address{Universit\'e de Lille, CNRS, Laboratoire Paul Painlev\'e, 59000 Lille, France}
\subjclass[2020] {Primary 12F12, 14H10 ; Secondary  14H30, 12E30, 11Gxx}
\keywords{Hurwitz spaces, inverse Galois theory, rational points, patching, components}
\thanks{\emph{Acknowledgments}. 
The author would like to thank Angelot Behajaina, Benjamin Collas, Daniel Loughran, Shinichi Mochizuki, B\'eranger Seguin and the anonymous referee for valuable comments on a first version of the manuscript and numerous interesting exchanges. The author also acknowledges the support of the CDP C2EMPI project under the Initiative of Excellence of the University of Lille. The author is also grateful to Tel Aviv University, the Technion Institute of Technology in Haifa, the Research Institute of Mathematical Sciences in Kyoto and Osaka University for their hospitality. 
}
\date{\today}
\begin{document}

\begin{abstract}
Hurwitz spaces which parametrize 
branched covers of the line play a prominent role in inverse Galois theory. 
This paper surveys fifty years of works in this direction
with emphasis on recent advances.
Based on the Riemann-Hurwitz theory of covers, the geometric and arithmetic setup is first reviewed, 
followed by the semi-modern developments of the 1990--2010 period: 
large fields, 
compactification,
descent theory, modular towers. The second half  of the paper highlights more recent achievements that have reshaped the arithmetic of Hurwitz spaces, notably via the systematic study of 
the ring of components. These include the construction of components defined over $\Qq$,
and the Ellenberg-Venkatesh-Westerland approach to rational points over finite fields, applied to the Cohen-Lenstra heuristics and the Malle conjecture over function fields $\Ff_q(T)$.
\end{abstract}

\dedicatory{Dedicated to Hiroaki Nakamura on the occasion of his 60th birthday}

\maketitle

\begingroup
\renewcommand\thefootnote{}
\footnotetext{This paper will appear in ``Low Dimensional Topology, Number Theory and
Arithmetic Galois Theory'', edited by Pierre D\`ebes, Masanori Morishita, Takao Satoh,
Akio Tamagawa, to be published by World Scientific.}
\endgroup


This paper surveys the Hurwitz space theory as it fits within inverse Galois theory. The history goes back to Fried's 1977 paper \cite{fried77}: following Riemann and Hurwitz, he offers a general construction of \emph{Hurwitz moduli spaces} of 
branched covers of the projective line
and explains how they can be used to realize finite groups as Galois groups. This program will be successfully pursued by Matzat and his school for many finite groups. Section \ref{sec:RH-construction} reviews this first part with emphasis on the fundamentals, 
which remain essential in the subsequent developments. Fundamentals include \emph{patching}, a parallel $p$-adic character of the theory. Section \ref{sec:further-developments} is devoted to the \emph{semi-modern} developments of the theory: Hurwitz spaces as schemes, compactification, rational points over large fields, descent theory, modular towers. They happened between 1990 and 2010 and consolidated the theory. The contents of the first two sections is much documented in the literature. Here
I highlight the main ideas, tools, results and their articulation, and provide references\footnote{\hbox{Including to previous surveys: \cite{DeZakopane} on basics, \cite{De-Berkeley} on descent theory,  \cite{DeMT} on modular towers.}}. The last two sections emphasize the more recent achievements and are more detailed. 
The search of components of Hurwitz spaces defined over $\Qq$ has been a major motivation since the 2000s. Section \ref{sec:components} discusses the latest results 
along with some applications. Our presentation is mainly motivated by arithmetic questions and rationality issues over \emph{number fields}. In the last decade though, significant results on rational points of Hurwitz spaces over \emph{finite fields}, with striking arithmetic applications, have been obtained. 
\hbox{Section \ref{sec:finite-fields} is devoted to this part.}
\vskip 1mm

The paper is based on several lectures I have given in the recent years: a mini-course in Israel in 2023, a series of talks in Japan between 2023 and 2025 within the AHGT project, and a talk at the Low Dimension Topology and Number Theory XVI conference held in Osaka in 2025 in honor of Professor Hiroaki Nakamura's 60th birthday. 

\vskip 1,5mm
\noindent
{\emph{Hiroaki Nakamura and I first met in 1995, twice that year -- in Heidelberg and in Luminy. Our mentors and research directions made us something like cousins within the family of geo\-me\-tric Galois theory. Over the years, we have shared many ideas in numerous conversations. This paper is an opportunity to express my gratitude for his continued interest, \hbox{e.g.} in Hurwitz spaces, and my admiration for the depth and breadth of his contributions.}}
\vskip -1mm

\vskip -30mm
\tableofcontents

\vskip -5mm

\noindent
\textit{Galois preamble}. {{{Inverse Galois theory}}} is the main theme in the background, with at its very core the {Inverse Galois Problem}, which we recall as a conjectural statement.

\vskip 2mm

\noindent
{\bf Inverse Galois Problem}. \emph{
Every finite group is the Galois group of some Galois field extension of the field $\Qq$ of rational numbers.}
\vskip 2mm

This problem is still open and notoriously difficult. There are very few groups for which it is ``easy'', apart from small cyclic groups and symmetric groups $S_n$.\footnote{Even for arbitrary cyclic groups, the common proof  uses Dirichlet's theorem on primes in arithmetic progressions. As to the symmetric group $S_n$, an elementary argument due to van der Waerden shows that for $n\geq 3$, it is the Galois group of the polynomial $X^n-X-1$.} Most groups require advanced strategies.
There is a natural one for solvable groups: one basically tries to pile up abelian extensions in a consistent manner. The strategy is simple but working it out was a real \emph{tour de force}, a big 
achievement due to Shafarevich (1954).
\vskip 1mm

Another idea is to first work over a bigger and easier field $k$ (instead of $\Qq$) and then see the remaining problem as a \emph{descent problem}. A classical strategy, going back to Riemann and Hurwitz, follows this line with $k=\Cc(T)$.
The beginning stage, which we review in Section \ref{ssec:RH-construction}, is this striking result:
\vskip 2mm

\noindent
{\bf Inverse Galois Problem over $\Cc(T)$.} \label{cor:IGP-C(T)} {\it Every finite group is the Galois group of some and in fact of many Galois extensions $N/\Cc(T)$}. 
\vskip 2mm

A second stage addresses the question of whether among these ``many'' extensions $N/\Cc (T)$, at least one can be descended to a Galois extension $N_0/\Qq(T)$ with the same group $G$. It is this part that involves \emph{Hurwitz spaces}. The ultimate goal is the
\vskip 2mm

\noindent
{\bf Regular Inverse Galois Problem over $\Qq$}. \emph{
Every finite group is the Galois group of some field extension $N_0/\Qq(T)$ that in addition is regular over $\Qq$, \hbox{i.e.} $N_0\cap \overline \Qq=\Qq$.}
\vskip 2mm

The final stage, from $\Qq(T)$ to $\Qq$, is classically handled by the celebrated Hilbert irreducibility theorem: the Regular Inverse Galois Problem implies the Inverse Galois Problem. 
This stage is perfectly understood, even 
at a quantitative level;
see \cite{IsrJ2} and \cite{motte23} which give lower bounds in $c y^{\alpha}$ for the number of Galois extensions of a number field $k$ with a prescribed group $G$ and a discriminant $\leq y$ obtained by specializing a $k$-regular Galois extension $E/k(T)$ of group $G$ (for some constants $c, \alpha>0$ depending on $E/k(T)$). 
\vskip 1mm

We emphasize first and second stages: --over $\Cc(T)$-- and --from $\Cc(T)$ to $\Qq(T)$--. They are presented in Section \ref{sec:RH-construction}; we refer to them as the \emph{Riemann-Hurwitz strategy}.

\section{Hurwitz spaces} \label{sec:RH-construction}

Section \ref{ssec:RH-construction} recalls the main steps of the first stage of the Riemann-Hurwitz strategy, leading to the Inverse Galois Problem over $\Cc(T)$, and to the definition of Hurwitz spaces as complex analytic spaces. Section \ref{sec:Hurwitz-spaces} focuses on the algebraic structure and the moduli properties of Hurwitz spaces. Section \ref{sec:descent} explains the second stage of the Riemann-Hurwitz strategy:  how to use Hurwitz spaces for the descent from $\Cc(T)$ to $\Qq(T)$. Section \ref{sec:patching} reviews {\it patching}, another central tool in inverse Galois theory,
  which can be seen as a $p$-adic avatar of the Riemann-Hurwitz construction. Section \ref{ssec:nonGalois} is a final note on the use of Hurwitz spaces for non-Galois covers, which was the context of Hurwitz' original motivation.

\subsection{The Riemann-Hurwitz construction}  \label{ssec:RH-construction}
See \cite{De_St-Etienne2} for more details, and \cite{De_St-Etienne3} for a full presentation. Let $G$ be a finite group.
\vskip 1mm

\subsubsection{Step 1 \hbox{\rm [Initial data]}} Pick an integer $r\geq 2$, a generating system $\{g_1,\ldots, g_{r}\}\subset G\setminus\{1\}$ of the group $G$ such that $g_1\cdots g_r=1$, and $r$ distincts points $t_1,\ldots,t_r\in \Aa^1(\Cc)$. 
\vskip 0,5mm

\noindent
(This is possible provided that $r$ is bigger than the rank of $G$). 

\subsubsection{Step 2 \hbox{\rm [Topological stage]}} Set  $\underline t=\{t_1,\ldots,t_r\}$ and denote the fundamental group of $\Pp^1(\Cc)\setminus \underline t$ based at the point at infinity  by $\pi_1(\Pp^1(\Cc)\setminus \underline t,\infty)$. Fix a ``standard loop'' $\gamma_i$ based at $\infty$ and revolving once around $t_i$ in the anticlockwise direction, $i=1,\ldots,r$. Consider the group homomorphism $\varphi: \pi_1(\Pp^1(\Cc)\setminus \underline t,\infty) \rightarrow G$ that maps the homotopy class $[\gamma_i]$ to the element $g_i\in G$, $i=1,\ldots,r$. 
\vskip 0,5mm

\noindent
(By ``standard loop'' we mean that the tuple $\underline \gamma=(\gamma_1, \ldots, \gamma_r)$ should satisfy some additional technical conditions making $\underline \gamma$ a {\emph{topological bouquet} for $\Pp^1(\Cc)\setminus \underline t$ at $\infty$ (\hbox{e.g.} \cite[\S 1.1]{DeEm04}). A consequence is that   $[\gamma_1], \ldots, [\gamma_r]$ generate the fundamental group $\pi_1(\Pp^1(\Cc)\setminus \underline t,\infty)$ with the  one condition $[\gamma_1] \cdots[\gamma_r] = 1$. This guarantees that the homomorphism $\varphi$ is well-defined).
\vskip 1mm

From fundamental group theory, the group homomorphism $\varphi$, composed with the regular representation $G \rightarrow S_{|G|}$, gives rise to a topological (unramified) cover $f:X \rightarrow \Pp^1(\Cc)\setminus \underline t$, which is Galois of group $G$. 
The upper space $X$ is connected as a consequence of the homomorphism $\varphi$ being surjective (onto $G$).

\subsubsection{Step 3 \hbox{\rm [Completion]}} \label{ssec:completion}The topological space $X$ inherits the analytic structure of $\Pp^1(\Cc)\setminus \underline t$ to become 
a Riemann surface, and  $f:X \rightarrow \Pp^1(\Cc)\setminus \underline t$ is then a finite analytic cover. Using classical arguments from Riemann surface theory, one can complete the cover $f$ above the missing points $t_1,\ldots, t_r$. More precisely, one can define an analytic map of compact Riemann surfaces $\overline f:\overline X \rightarrow \Pp^1(\Cc)$ such that its restriction to $\Pp^1(\Cc)\setminus \underline t$ is isomorphic to $f$ (as topological and analytic covers).

\subsubsection{Step 4 \hbox{\rm [Algebraization]}} \label{ssec:algebraization} The deeper part of the Riemann-Hurwitz construction shows that the meromorphic function field extension $M(\overline X)/M(\Pp^1(\Cc))$ associated to the map $\overline f$ is a finite Galois extension of group $G$, and that the field $M(\Pp^1(\Cc))$ is isomorphic to $\Cc(T)$.

\subsubsection{{\rm [Outcome of the Riemann-Hurwitz construction]}} 
 {\it The promised Galois extensions $N/\Cc(T)$ of group $G$ are the extensions $M(\overline X)/M(\Pp^1(\Cc))$, where $\underline t=\{t_1,\ldots,t_r\}$ may vary over the set ${\rm Conf}_r(\Cc)$ of all subsets of $r$ distinct elements of $\Aa^1(\Cc)$.  }
\vskip 2mm

This solves in particular the Inverse Galois Problem over $\Cc(T)$.
\vskip 1mm

\begin{remark}[Riemann Existence Theorem] The arguments mentioned above give more than the mere construction of Galois extensions $N/\Cc(T)$. 
They 
provide the foundation of several equivalent categorical descriptions of branched covers of the line. These are specifically stated in \S \ref{ssec:cat-G-covers} and commented in Remark \ref{rem:RET-cont}, and commonly referred to as  the \emph{Riemann Existence Theorem}. 
\end{remark}

\subsubsection{Configuration space and Hurwitz set} \label{ssec:conf-space-H-space} \label{ssec:hurwitz set}
The set 
\vskip 1mm

\centerline{${\rm Conf}_r(\Cc)=\{ \underline t =\{t_1,\ldots,t_r\} \hskip 2pt | \hskip 2pt t_i \in \Aa^1(\Cc)\ \hbox{\rm  and }   t_i\not= t_j \ \hbox{\rm  if } i\not= j\ (i,j=1,\ldots,r)\}$} 
\vskip 1mm

\noindent
is called the \emph{configuration space for branch point data}. The idea behind 
Hurwitz spaces is to perform the Riemann-Hurwitz construction in families, above ${\rm Conf}_r(\Cc)$. As a set of complex points, the \emph{Hurwitz space}
is defined by 
\vskip 1mm

\centerline{${\rm Hur}_{r,G}(\Cc) = \left\{ (\underline t, \overline \varphi) \hskip 1pt | \hskip 2pt \underline t\in {\rm Conf}_r(\Cc)\hbox{ and } \left\{\begin{matrix}\varphi \in {\rm Hom}(\pi_1(\Pp^1(\Cc)\setminus \underline t,\infty),G)\\ \varphi([\gamma_i]) \not=1,\hskip 1mm  i=1,\ldots,r \hfill \end{matrix}\right.\hskip 2mm \right\}$.}
\vskip 1mm

\noindent
By $\overline \varphi$, we mean the \emph{isomorphism class} of the group homomorphism $\varphi$; 
the isomorphisms are specified in \S \ref{ssec:isomorphisms}.  For simplicity, we often omit the bar, \hbox{i.e.} write $\varphi$ for $\overline \varphi$ in the notation. The second condition on $\varphi$ guarantees that the points $t_1,\ldots,t_r$ are indeed branched.
\vskip 1mm 
The definition of ${\rm Hur}_{r,G}(\Cc)$ reflects the conclusion of the Riemann-Hurwitz construction that complex Galois covers with given group and fixed branch point number can be classified by their branch point set $\underline t$, a continuous data, conjoined with some discrete data, the homomorphism $\varphi$. 
The Hurwitz set ${\rm Hur}_{r,G}(\Cc)$ inherits the analytic structure of ${\rm Conf}_{r}(\Cc)$ to become a smooth analytic space for which the natural map $\Phi: {\rm Hur}_{r,G}(\Cc) \rightarrow {\rm Conf}_{r}(\Cc)$ is a finite analytic cover.

\subsection{Hurwitz spaces as moduli spaces} \label{sec:Hurwitz-spaces}
Making the Hurwitz space a \emph{moduli space} in the full algebraic sense requests thinking in terms of categories. So does the descent part of the \hbox{Riemann-Hurwitz} strategy --from $\Cc(T)$ to $\Qq(T)$-- (explained in \S \ref{sec:descent}).

\subsubsection{The category of G-covers}  \label{ssec:cat-G-covers} \hskip1mm
\vskip 1mm

\noindent
{\ref{ssec:cat-G-covers}.1. {\it The objects of the category.} The Riemann-Hurwitz construction shows that there are one-one correspondences between 

(a) points $(\underline t, \varphi)\in {\rm Hur}_{r,G}(\Cc)$ (as defined above in \S \ref{ssec:conf-space-H-space})

\noindent
and isomorphism classes

(b) of topological (unramified) covers $f:X\rightarrow \Pp^1(\Cc)\setminus \underline t$ given with a homomorphism $G \rightarrow {\rm Aut}(f)$ such that the action of  $G$ on the fibers is free and transitive, or, equivalently,

 (c) of finite algebraic morphisms $f:X\rightarrow \Pp^1_\Cc$ ramified exactly above every point in $\underline t$, with $X$ smooth, given with a homomorphism $G \rightarrow {\rm Aut}(f)$ such that the action of   $G$ on the unramified fibers is free and transitive, or, equivalently, 

(d) of Galois extensions $F/\Cc(T)$ ramified exactly above every point in $\underline t$, given with an injective homomorphism 
{\hbox{${\rm Gal}(F/\Cc(T)) \hookrightarrow G$.}}
\vskip 1mm

\begin{remark}[Riemann existence theorem (continued)] \label{rem:RET-cont}
Here are some indications on the various correspondences. The Riemann-Hurwitz construction from \S \ref{ssec:RH-construction} yields correspondences \hbox{${\rm (a)} \hskip -1mm \Rightarrow \hskip-1mm {\rm (b)}$} and \hbox{${\rm (b)} \hskip-1mm \Rightarrow \hskip -1mm {\rm (d)}$}. The normalization process provides \hbox{${\rm (d)} \hskip -1mm \Rightarrow \hskip-1mm {\rm (c)}$}: namely, the normalization $\mathcal X$ of $\Pp^1_\Cc$ in the function field $M(\overline X)$ yields a finite algebraic morphism $\mathcal X\rightarrow \Pp^1_\Cc$ ramified above $\underline t$. The inverse correspondence 
\hbox{${\rm (c)} \hskip -1mm \Rightarrow \hskip-1mm {\rm (d)}$} is provided by the function field functor.
Restriction above $\Pp^1(\Cc)\setminus \underline t$ of a finite algebraic morphism $\mathcal X\rightarrow \Pp^1_\Cc$ ramified above $\underline t$ induces a topological cover of $\Pp^1(\Cc)\setminus \underline t$
 that is topologically  equivalent to the original cover $f$, thus giving \hbox{${\rm (c)} \hskip -1mm \Rightarrow \hskip-1mm {\rm (b)}$}. Finally \hbox{${\rm (b)} \hskip -1mm \Rightarrow \hskip-1mm {\rm (a)}$} goes like this.
Starting from a topological cover $f:X\rightarrow \Pp^1(\Cc)\setminus \underline t$ and a topological bouquet $\underline \gamma = (\gamma_1,\ldots,\gamma_r)$ of $\Pp^1(\Cc)\setminus \underline t$ based at $\infty$, consider the monodromy permutations of the fiber $f^{-1}(\infty)$ associated with the loops $\gamma_1,\ldots,\gamma_r$. These permutations are the \emph{branch cycles} of $f$ \hbox{w.r.t.}  $\underline \gamma$. Their tuple, which we denote by ${\rm BCD}_{\underline \gamma}(f)$ and call the \emph{branch cycle description} of $f$ \hbox{w.r.t.} $\underline \gamma$, gives rise to a homomorphism $\varphi \in {\rm Hom}(\pi_1(\Pp^1(\Cc)\setminus \underline t,\infty),G)$, and
so to a point $(\underline t, \varphi) \in {\rm Hur}_{r,G}(\Cc)$.
\end{remark}

\begin{definition} The equivalent objects in (a),(b),(c),(d) are called  \emph{G-covers of $\Pp^1_\Cc$} (or \emph{G-extensions of $\Cc(T)$} in the last case) with group $G$ and $r$ branch points. The subgroup $\varphi(\pi_1(\Pp^1(\Cc)\setminus \underline t,\infty)) \subset G$ is the monodromy group of the cover $f:X\rightarrow \Pp^1(\Cc)\setminus \underline t$ and the Galois group ${\rm Gal}(F/\Cc(T))$, up to isomorphism; it is called the \emph{group of the G-cover}. 
\end{definition}

\begin{remark}[Connectedness]  The general definition of G-covers does not assume that they are \emph{connected}, that is, that the upper 
space $X$ of the topological cover is connected, or, equivalently, irreducible, in algebraic terms. This is equivalent to the homomorphism $\varphi: \pi_1(\Pp^1(\Cc)\setminus \underline t,\infty)\rightarrow G$ being surjective (\hbox{i.e.} to the group of the cover being $G$), or, for G-extensions, to the monomorphism ${\rm Gal}(F/\Cc(T)) \hookrightarrow G$ being bijective. Recall however that the G-covers involved in the Riemann-Hurwitz construction \emph{are} connected. The relaxed definition allows more flexibility in Hurwitz space theory.
\end{remark}

\begin{remark}[G-covers over more general fields]Via the last two descriptions (c),(d), the definition of G-covers extends to the situation that $\Cc$ is replaced by any algebraically closed field $\overline k$. If $\overline k$ is of characteristic $0$ or prime to $|G|$, they correspond to points $(t,\varphi)$ in the set ${\rm Hur}_{r,G}(\overline k)$ defined similarly as ${\rm Hur}_{r,G}(\Cc)$ (\S \ref{ssec:hurwitz set}) with the difference that the topological fundamental group $\pi_1(\Pp^1(\Cc)\setminus \underline t,\infty),G)$ should be replaced by the \'etale fundamental group $\pi_1^{\hbox{\tiny\'et}}(\Pp^1(\Cc) \setminus \underline t,\infty)$\footnote{This \'etale fundamental group is the Galois group of the maximal algebraic extension $\Omega_{\underline t}/\overline k(T)$ unramified above ${\underline t}$, and $\infty$ here denotes a fixed embedding $\Omega_{\underline t} \hookrightarrow \overline{k(T)}$. By the Riemann existence theorem, this Galois group is the profinite completion of the topological fundamental group $\pi_1(\Pp^1(\Cc)\setminus \underline t,\infty),G)$.}. Over an arbitrary field $k$, G-covers can also be defined, by descriptions (c),(d), provided that, in (d), the extension $F/k(T)$ be further requested to be \emph{regular over $k$} (or \emph{$k$-regular}), \hbox{i.e.}, to satisfy $F\cap \overline k= k$. This amounts, in (c), to request that the morphism $f:X\rightarrow \Pp^1_k$ be geometrically irreducible.
\end{remark}

\noindent
{\ref{ssec:cat-G-covers}.2. {\it Isomorphisms between G-covers.} \label{ssec:isomorphisms} 
They are traditionally of two types:
\vskip 0,5mm

\noindent
\hskip 2mm 
- for {\it unmarked G-covers} : $\varphi_1, \varphi_2 \in {\rm Hom}(\pi_1(\Pp^1(\Cc)\setminus \underline t,\infty),G)$ are isomorphic if 
\vskip 0,5mm

\centerline{$\varphi_2 = g \varphi_1 g^{-1}$ for some $g\in G$.} 
\vskip 1mm 

\noindent
That is, the morphisms come from the inner automorphisms of $G$. The resulting category equivalently corresponds to that of algebraic morphisms $f:X\rightarrow \Pp^1_\Cc$ as in {\ref{ssec:cat-G-covers}.1(c), up to natural  isomorphisms, or, to that of G-Galois extensions $F/\Cc(T)$ up to $\Cc(T)$-conjugacy.

\vskip 1mm

\noindent
\hskip 2mm - for {\it marked G-covers}: $\varphi_1, \varphi_2 \in {\rm Hom}(\pi_1(\Pp^1(\Cc)\setminus \underline t,\infty),G)$ are isomorphic if they are equal. 
\vskip 0,5mm 

\noindent
That is, the only morphism is the identity. The resulting category equivalently corresponds to that of algebraic morphisms $f:X\rightarrow \Pp^1_\Cc$ as in {\ref{ssec:cat-G-covers}.1(c), given with a point in the fiber above $\infty$, up to natural isomorphism, or, equivalently, to G-Galois extensions $F/\Cc(T)$ given with a $\Cc(T)$-embedding $F \hookrightarrow \Cc((1/T))$.
\vskip 1mm

If $k$ is a subfield of $\Cc$, a $\Cc$-G-cover $f$ is said to be \emph{defined over $k$} if there is a $k$-G-cover $f_0$ such that $f$ and $f_0\otimes_k \Cc$ are isomorphic.

\begin{remark}[marked \hbox{vs.} unmarked] \label{rem:marked-unmarked1} The marked context is more adapted to the development of the theory. The unmarked context is closer to the Regular Inverse Galois Problem spirit: unmarked G-covers defined over a field $k$ are the $k$-regular extensions $F/k(T)$ one tries to realize (ideally over $\Qq$). Marked G-covers defined over $k$ are those G-covers $f:X\rightarrow \Pp^1_k$ with a \emph{totally $k$-rational fiber above $\infty$} (i.e. only consisting of $k$-rational points); they correspond to $k$-regular extensions $F/k(T)$ \hbox{with some $k(T)$-embedding $F \hookrightarrow k((1/T))$.}
\end{remark}

\subsubsection{Main result} 
For both categories of unmarked or marked G-covers, the main result states that
\emph{the stack of G-covers of $\Pp^1_\Cc$ of group $G$ and with $r$ branch points admits a coarse\footnote{And in fact \emph{fine} for marked G-covers; see Proposition \ref{prop:basic-descent}.} moduli space ${\rm Hur}_{r,G}$ defined over $\Qq$ with compatible action of Galois}. In particular:

\begin{theorem} \label{thm-hurwitz-space}  There is a union ${\rm Hur}_{r,G}$ of smooth and quasi-projective \hbox{geometrically} irreducible varieties defined over $\overline \Qq$, such that ${\rm Hur}_{r,G}$ is globally invariant under ${\rm Gal}(\overline \Qq/\Qq)$, and has the following properties.
\vskip 0,5mm

\noindent
For every field $k$ of characteristic $0$ with algebraic closure $\overline k$,
\vskip 0,5mm

\noindent
{\rm (a)} there is a bijection between the set ${\rm Hur}_{r,G}(\overline k)$ of $\overline k$-rational points on ${\rm Hur}_{r,G}$ and the set also previously denoted ${\rm Hur}_{r,G}(\overline k)$ of isomorphism classes of G-covers of $\Pp^1_{\overline k}$ of group $G$ and with $r$ branch points in $\Aa^1(\overline k)$,
\vskip 0,5mm

\noindent
{\rm (b)} for every G-cover $f$ of $\Pp^1_{\overline k}$ of group $G$ and with $r$ branch points in $\Aa^1(\overline k)$, 
if $[f]$ is the representative point on the variety ${\rm Hur}_{r,G}$, we have  $[f^\tau] = [f]^\tau$, for every $\tau \in {\rm Gal}(\overline k/k)$.
\vskip 1mm

\noindent
Furthermore, the map $\Phi: {\rm Hur}_{r,G}(\Cc) \rightarrow {\rm Conf}_{r}(\Cc)$ that sends each point $[f]\in {\rm Hur}_{r,G}(\Cc)$ to the branch set $\underline t$ of some/any representing cover $f$ of the isomorphism class $[f]$ is a finite topological (unramified) cover, and is induced by a finite \'etale morphism ${\rm Hur}_{r,G} \rightarrow {\rm Conf}_{r}$ between the underlying algebraic varieties.
\end{theorem}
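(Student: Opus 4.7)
The plan is to start from the analytic Hurwitz set $\mathrm{Hur}_{r,G}(\Cc)$ of \S\ref{ssec:hurwitz set}, already a smooth analytic space equipped with the finite unramified analytic cover $\Phi$ to $\mathrm{Conf}_r(\Cc)$, and to proceed in three stages: \emph{(i)} algebraize over $\Cc$; \emph{(ii)} descend to $\overline\Qq$ in a Galois-equivariant way; \emph{(iii)} interpret $\overline k$-points functorially over any field $k$ of characteristic zero and check Galois compatibility.

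For \emph{(i)}, I use that the configuration space $\mathrm{Conf}_r$ is a smooth affine $\Qq$-variety, realized as the complement of the discriminant locus inside $\mathrm{Sym}^r(\Aa^1)\simeq\Aa^r$. Grauert--Remmert's theorem on algebraization of finite analytic covers of complex algebraic varieties (alternatively, Riemann existence in SGA\,1 form: the equivalence between finite topological covers of a smooth $\Cc$-variety of finite type and its finite étale covers) promotes $\Phi$ to a finite étale morphism $\mathrm{Hur}_{r,G,\Cc}\to\mathrm{Conf}_{r,\Cc}$ of smooth complex algebraic varieties. Quasi-projectivity of $\mathrm{Hur}_{r,G,\Cc}$ follows because finite morphisms preserve quasi-projectivity, and its geometrically irreducible components correspond to the connected components of the analytic Hurwitz set.

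For \emph{(ii)}, the key input is the comparison isomorphism identifying $\pi_1^{\hbox{\tiny\'et}}(\mathrm{Conf}_{r,\overline\Qq})$ with the profinite completion of $\pi_1(\mathrm{Conf}_r(\Cc))$, together with invariance of the category of finite étale covers under the base change $\overline\Qq\hookrightarrow\Cc$. This produces a unique finite étale cover $\mathrm{Hur}_{r,G}\to\mathrm{Conf}_{r,\overline\Qq}$ over $\overline\Qq$ whose base change to $\Cc$ recovers the variety from \emph{(i)}. Global $\mathrm{Gal}(\overline\Qq/\Qq)$-invariance then follows because the cover in question is intrinsically characterized by $G$ and $r$, data stable under Galois.

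For \emph{(iii)}, over any algebraically closed field $\overline k$ of characteristic zero the bijection between $\overline k$-points of $\mathrm{Hur}_{r,G}$ and isomorphism classes of G-covers follows from the equivalence of descriptions (a)--(d) of \S\ref{ssec:cat-G-covers}.1 transposed to $\overline k$ via the étale fundamental group (cf.~the third remark preceding the theorem) and from functoriality of the construction under base change; the identity $[f^\tau]=[f]^\tau$ is then formal, since both sides compute the same intrinsic moduli datum and the étale descent of \emph{(ii)} respects Galois. The main obstacle I anticipate is the bookkeeping in step \emph{(ii)}: descending to $\overline\Qq$ with the correct Galois action really requires the base-change invariance of $\pi_1^{\hbox{\tiny\'et}}$ and the functoriality of Riemann existence, and these have to be threaded through consistently. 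Upgrading the coarse moduli space to a fine one in the marked case would demand a family version of the construction via a rigidification at $\infty$, but this is handled separately in Proposition \ref{prop:basic-descent} and is not needed at the level of the present statement.
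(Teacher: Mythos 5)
Your proposal follows essentially the same route as the paper's own sketch: algebraize the finite unramified analytic cover $\Phi:{\rm Hur}_{r,G}(\Cc)\rightarrow{\rm Conf}_r(\Cc)$ via GAGA/Riemann existence into a finite \'etale cover of algebraic varieties, descend from $\Cc$ to an arithmetic base, and then read off the moduli interpretation and Galois compatibility from the functoriality of the construction. The only difference is cosmetic: for the arithmetic descent the paper invokes Weil's descent criterion to get a model over $\Qq$, whereas you use base-change invariance of the \'etale fundamental group between algebraically closed fields of characteristic $0$ plus the intrinsic characterization of the cover to get the $\overline\Qq$-model and its global Galois invariance; at the level of detail of the paper's ``Comments on proof'' these are interchangeable.
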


\noindent
{\it Comments on proof}. The proof follows a similar pattern as the Riemann-Hurwitz construction. 
As we already know, the map $\Phi: {\rm Hur}_{r,G}(\Cc) \rightarrow {\rm Conf}_{r}(\Cc)$ is a finite unramified analytic cover of smooth analytic spaces. The configuration space ${\rm Conf}_{r}(\Cc)$ being an algebraic variety defined over $\Cc$, one can appeal to classical GAGA results of Serre and Grauert-Remmert
\cite{serreGAGA}, \cite{G-R1}, \cite{G-R2}, \cite{G-R3}. These generalize the completion and algebraization arguments mentioned in the one dimensional Riemann-Hurwitz construction (\S \ref{ssec:RH-construction}) to show that the cover $\Phi$ comes from an algebraic finite \'etale cover ${\rm Hur}_{r,G} \rightarrow {\rm Conf}_{r}$ defined over $\Cc$. A final argument involving Weil's descent shows that it 
can be defined over $\Qq$. 
For more details, see \cite{fried77}, \cite{FrVo1}, \cite{Vo96}.

\subsubsection{Topological description} \label{ssec:top-description} \hskip 1mm
\vskip 1mm

\noindent
\ref{ssec:top-description}.1. {\it The braid group action.} The map $\Phi: {\rm Hur}_{r,G}(\Cc) \rightarrow {\rm Conf}_{r}(\Cc)$ is originally a topological (unramified) finite cover. Classically the monodromy attaches to it an action on any fiber of the fundamental group of the base space ${\rm Conf}_{r}(\Cc)$. This fundamental group is  known to be the {\it braid group $B_r$ on $r$ strands}. Furthermore, any fiber is  in one-one correspondence with the set of possible isomorphism classes $\overline \varphi$ of homomorphisms $\varphi \in {\rm Hom}(\pi_1(\Pp^1(\Cc)\setminus \underline t,\infty),G)$, or equivalently, via the choice of a bouquet $\underline \gamma$ for $\Pp^1(\Cc)\setminus \underline t$ based at $\infty$, with the set
\vskip 1mm

\centerline{${\rm Ni}_{r,G} = \{{\underline g} = (g_1,\ldots,g_r) \in G^r \hskip 2pt | \hskip 2pt g_1\cdots g_r = 1\} / \sim$ ,}
\vskip 1mm

\noindent
called the \emph{Nielsen class}, of all possible ``branch cycle descriptions'' $\underline g$ modulo $\sim$. The symbol ``$\sim$'' indicates that the tuples $\underline g$ should be regarded up to the appropriate equivalence corresponding to the marked or unmarked situations. For simplicity we often omit the reference to $\sim$ and use usual tuples $(g_1,\ldots,g_r)$ to denote elements in the Nielsen class.

The action of the braid group $B_r$ on the Nielsen class ${\rm Ni}_{r,G}$ can be explicitly computed. Namely, the group $B_r$ has standard generators $Q_1,\ldots,Q_{r-1}$, which act on elements $\underline  g=(g_1,\ldots,g_r) \in {\rm Ni}_{r,G}$ as follows:
\vskip 1mm

\centerline{$Q_i\cdot \underline g = (g_1\ldots, g_ig_{i+1}g_i^{-1}, g_i,\ldots, g_r)$,\hskip 2mm  $i=1,\ldots,r-1$.}
\vskip 2mm

\noindent
\ref{ssec:top-description}.2. {\it Components of Hurwitz space.} \label{ssec:comp-Hurwitz}
This explicitness allows a thorough study of the topological properties of the cover $\Phi: {\rm Hur}_{r,G}(\Cc) \rightarrow {\rm Conf}_{r}(\Cc)$. In particular, we have this
\vskip 2mm

\noindent
{\bf Addendum to Theorem \ref{thm-hurwitz-space}}. {\it The components of the Hurwitz space ${\rm Hur}_{r,G}(\Cc)$ are in one-one correspondence with the orbits of  the action of the braid group $B_r$ on the Nielsen class ${\rm Ni}_{r,G}$. They coincide with the geometrically irreducible components of the Hurwitz space ${\rm Hur}_{r,G}$. For simplicity we call them the \emph{components} of ${\rm Hur}_{r,G}$.}
\vskip 2mm

Given an element $\underline g$ in the Nielsen class ${\rm Ni}_{r,G}$, denote its orbit under the braid group $B_r$ by $ B_r \hskip 1pt \underline g$. The corresponding component of the Hurwitz space ${\rm Hur}_{r,G}(\Cc)$ can be described as follows \cite[\S 1.1]{DeEm04}:

- it is the set of points $(\underline t,\varphi) \in {\rm Hur}_{r,G}(\Cc)$ 
such that $(\varphi(\gamma_1),\ldots,\varphi(\gamma_r))\in B_r \hskip 1pt \underline g$ for some/any bouquet $\underline \gamma = (\gamma_1,\ldots,\gamma_r)$ for $\Pp^1(\Cc)\setminus \underline t)$ based at $\infty$.

\noindent
or, equivalently, in terms of covers, 

- the set of all topological covers $f:X\rightarrow \Pp^1(\Cc)\setminus \underline t$ with $\underline t \in {\rm Conf}_r(\Cc)$ and such that for some topological bouquet $\underline \gamma$ of $\Pp^1(\Cc)\setminus \underline t$ based at $\infty$, we have ${\rm BCD}_{\underline \gamma}(f) = \underline g$, 

\noindent
or also as follows:

- if a  bouquet $\underline \gamma_{\underline t}$ of $\Pp^1(\Cc)\setminus \underline t$ based at $\infty$ is fixed for each $\underline t \in {\rm Conf}_r(\Cc)$, it is the set of all topological covers $f:X\rightarrow \Pp^1(\Cc)\setminus \underline t$ such that ${\rm BCD}_{\underline \gamma_{\underline t}}(f) \in B_r \hskip 1pt \underline g$, for all $\underline t \in {\rm Conf}_r(\Cc)$.

\begin{notation} We denote the component of the Hurwitz space corresponding to the orbit $B_r \hskip 1pt \underline g$ by $[\underline g]$ and sometimes abuse notation to also write $[\underline g]$ for $B_r \hskip 1pt \underline g$ itself. This identification does not depend on the choice of a topological bouquet.
\end{notation}

\begin{remark}[Marked and unmarked Hurwitz spaces] \label{rem:marked-unmarked}
There is a natural map from the marked Hurwitz space to the unmarked Hurwitz space. Let $H\subset G$ be  subgroup. Via this map, a marked G-cover of group $H$ is mapped to its unmarked version (of group $H$). If $\mathcal H$ is a component of the unmarked Hurwitz space ${\rm Hur}_{r,H}$ parametrizing \emph{connected} G-covers (of group $H$), its inverse image $\mathcal H^\ast$ is a component of the marked Hurwitz space ${\rm Hur}_{r,G}$. The reason is the following (see \hbox{e.g.} \cite[Prop.3.3.11(v)]{these-seguin}):
\vskip 0,5mm

\noindent
(*) {\it for connected covers, the inner automorphisms of $G$ can be realized by braids.} 
\vskip 0,5mm

\noindent
Consequently, the field of definition of $\mathcal H^\ast$ is the same as that of $\mathcal H$. For a connected G-cover of group $H$, working with its component in the unmarked Hurwitz space ${\rm Hur}_{r,H}$  or in the marked Hurwitz space ${\rm Hur}_{r,G}$
is essentially the same.
\end{remark} 

\subsection{Hurwitz descent from $\Cc$ to $\Qq$} \label{sec:descent}
The way the Hurwitz space is used to detect whether some of the ``many''  constructed Galois extensions $N/\Cc(T)$ of group $G$ can be ``descended to $\Qq$'' goes as follows. Let $\mathcal H\subset {\rm Hur}_{r,G}(\Cc)$ be a component of the Hurwitz space ${\rm Hur}_{r,G}$. Assume further that:

\noindent
(a) \emph{the component $\mathcal H$ is defined over $\Qq$},

\noindent
(b) \emph{the component $\mathcal H$ parametrizes \emph{connected} G-covers}.
\vskip 1mm

For each point $h\in \mathcal H$, denote its field of definition by $\Qq(h)$. 
The function field extension of the corresponding $\overline{\Qq(h)}$-G-cover is a Galois extension $N_h/\overline{\Qq(h)}(T)$ of group $G$ that is fixed by the action of the group ${\rm Gal}(\overline{\Qq(h)}/\Qq(h))$. One traditionally rephrases the last condition by saying that the {\it field of moduli} of the extension $N_h/\overline{\Qq(h)}(T)$ is $\Qq(h)$.
\vskip 0,5mm

Assume first that we work with marked G-covers. As recalled in \S \ref{ssec:descent}, there is then \emph{no obstruction to the field of moduli being a field of definition}: there exists a Galois extension $N_{h,0}/\Qq(h)(T)$, regular over $\Qq(h)$ (\hbox{i.e.} $N_{h,0} \cap \overline{\Qq(h)} = \Qq(h)$), 
that gives the extension $N_h/\overline{\Qq(h)}(T)$ by base change from $\Qq(h)$ to $\overline{\Qq(h)}$. 
Consequently ${\rm Gal}(N_{h,0}/\Qq(h)(T)) = G$. We obtain this conclusion:

\begin{proposition} \label{prop:Riemann-Hurwitz-descent}
In order to find a Galois extension $N_0/\Qq(T)$ of group $G$ regular over $\Qq$, it suffices to find,  for some $r\geq 2$, a $\Qq$-rational point on some component $\mathcal H\subset {\rm Hur}_{r,G}$ defined over $\Qq$ and parametrizing connected marked covers. 
\end{proposition}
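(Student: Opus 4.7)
The plan is to unpack the moduli interpretation of the assumed $\Qq$-rational point and then invoke the obstruction-free descent available for marked G-covers.

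First I would fix $h \in \mathcal H(\Qq)$ on the component $\mathcal H$. Since $\mathcal H$ is defined over $\Qq$, part (a) of Theorem \ref{thm-hurwitz-space} furnishes a marked G-cover $f : X \to \Pp^1_{\overline \Qq}$ of group $G$ with $r$ branch points whose isomorphism class is represented by $h$. Applying part (b) to an arbitrary $\tau \in {\rm Gal}(\overline \Qq/\Qq)$ gives $[f^\tau]=[f]^\tau=h^\tau=h$. Thus the isomorphism class of $f$ is stable under ${\rm Gal}(\overline\Qq/\Qq)$, which is precisely the statement that $\Qq$ is a \emph{field of moduli} for $f$, equivalently for the associated marked G-extension $N_h/\overline \Qq(T)$.

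Second, I would invoke the principle --- recalled in \S \ref{ssec:descent} --- that for \emph{marked} G-covers there is no obstruction to the field of moduli being a field of definition. This yields a marked G-cover $f_0 : X_0 \to \Pp^1_\Qq$ with $f_0 \otimes_\Qq \overline\Qq \simeq f$, and hence, via description \ref{ssec:cat-G-covers}.1(d), a Galois extension $N_0/\Qq(T)$ with ${\rm Gal}(N_0/\Qq(T)) = G$.

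Third, regularity of $N_0$ over $\Qq$ would be read off from the hypothesis that $\mathcal H$ parametrizes \emph{connected} G-covers: this means $X$ is irreducible, so $X_0$ is geometrically irreducible over $\Qq$, and this is exactly the condition $N_0 \cap \overline \Qq = \Qq$ (equivalently, in Galois-theoretic terms, that base-changing $N_0$ from $\Qq(T)$ to $\overline \Qq(T)$ does not shrink the Galois group from $G$).

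The main obstacle is the second step. Passing from field of moduli to field of definition can fail for unmarked G-covers, and is responsible for the classical descent obstructions that motivate much of Hurwitz space arithmetic. The specific role of the ``marked'' hypothesis --- rigidifying by the choice of a point in the fiber above $\infty$, which kills the relevant automorphism group --- is what makes the obstruction vanish here and forces the restriction to marked components in the statement of the proposition.
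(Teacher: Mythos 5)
Your proposal is correct and follows essentially the same route as the paper: identify the $\Qq$-rational point with a marked G-cover whose field of moduli is $\Qq$ (via Theorem \ref{thm-hurwitz-space}), invoke the absence of a field-of-moduli/field-of-definition obstruction for marked G-covers (Proposition \ref{prop:basic-descent}(a)), and use the connectedness hypothesis to ensure the group is all of $G$ and the extension is regular. No gaps.
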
 

\noindent
Furthermore, the extension $N_0/\Qq(T)$ has a totally $\Qq$-rational fiber above $\infty$ (Remark \ref{rem:marked-unmarked1}). 
\vskip 1mm

If we work with unmarked G-covers, there may be an obstruction to the field of moduli being a field of definition; see \S \ref{ssec:descent}. This obstruction is however well-understood and known to vanish in some remarkable circumstances \cite{DeDo1}. For example, it does if $G$ is centerless. Under this extra assumption, the same conclusion as in Proposition \ref{prop:Riemann-Hurwitz-descent} holds.\footnote{Note though that in this context, the fiber above $\infty$ need not be totally $k$-rational.}
 The centerless assumption is in fact not restrictive towards the Regular Inverse Galois Problem: each finite group is known to be the quotient of some centerless group.

\begin{example} In the classical example that the group $G$ is the Fisher-Griess Monster group and $r=3$ (\cite[\S 7.4.7]{Serre-topics}, \cite{thompson}), there is a component $\mathcal H\subset {\rm Hur}_{3,G}$ of the Hurwitz space satisfying assumptions {\rm (a)} and {\rm (b)} and such that the morphism
$\Phi: \mathcal H\rightarrow {\rm Conf}_{3,G}$ is an isomorphism. Thus many $\Qq$-rational points exist on $\mathcal H$ and 
the centerless group Monster group is a regular Galois group over $\Qq$.
\end{example}

As is well-known, this example was the beginning of a series of regular realizations of groups over $\Qq$ (many of them simple) that revived the topic in the eighties and nineties. The strategy was the same, the main difficulty being to eventually find rational points on the Hurwitz space. For some groups, it could be overcome thanks to more and more sophisticated methods. But the problem remains open for many groups, including the Mathieu simple group $M_{23}$. Fried, Malle, Matzat, Thompson, V\"olklein and their schools were the main contributors. We refer to their papers and books, \emph{e.g.} \cite{Vo96}, \cite{MalMat}.

\subsubsection{Inertia Canonical Invariant} \label{ssec:ici} Beside the branch point set and the monodromy group, another classical invariant of G-covers is the \emph{Inertia Canonical Invariant}. This invariant is the {\emph{unordered} tuple $(C_1,\ldots,C_r)$\footnote{That is, a $r$-tuple modulo the action of the symmetric group $S_r$.} whose entries are the conjugacy classes $C_i$ of the branch cycles $g_i$ in the monodromy group of the G-cover (\hbox{w.r.t.} some/any bouquet)}. 

Classically, these conjugacy classes are also those of some ``distinguished'' generators of the nontrivial \emph{inertia} groups.
For complex $G$-covers, these generators are those mapping $(T-t_i)^{1/e_i}$ to $e^{2i\pi/e_i}(T-t_i)^{1/e_i}$ (where $e_i$ is the ramification index associated with the branch point $t_i$).

This invariant is constant on components $\mathcal H\subset {\rm Hur}_{r,G}$ (due to its topological nature).  We denote it by ${\rm ICI}(\mathcal H)$. 
Furthermore the group ${\rm Gal}(\overline \Qq/\Qq)$ has a simple action on this invariant. Namely it follows from the \emph{Branch Cycle Lemma} (\cite{fried77}, \cite[\S 6.1.3.5]{coursM2}) that the action of any element $\tau \in {\rm Gal}(\overline \Qq/\Qq)$ raises these conjugacy classes to their cyclotomic $\chi(\tau)^{-1}$-power. Here $\chi: {\rm Gal}(\overline \Qq/\Qq) \rightarrow (\Zz/|G|\Zz)^\times$ is the cyclotomic character. That is:
\vskip 2mm

\noindent
(*) {\it If $\mathcal H\subset {\rm Hur}_{r,G}$ is a component of the Hurwitz space, then ${\rm ICI}(\mathcal H^\tau) = {\rm ICI}(\mathcal H)^{\hskip 1pt \chi(\tau)^{-1}}$}
\vskip 2mm

\noindent
A necessary condition for a component $\mathcal H$ to be defined over $\Qq$ is that ${\rm ICI}(\mathcal H)$ is invariant when raised to prime-to-$|G|$ powers. In this case, ${\rm ICI}(\mathcal H)$ is said to be \emph{globally $\Qq$-rational}.

\begin{notation} \label{notation_HGC} Given an unordered $r$-tuple $\underline C$ of nontrivial conjugacy classes of the group $G$, it is common to denote by ${\rm Hur}_{r,G}(\underline C)$ the union of all components $\mathcal H\subset {\rm Hur}_{r,G}$ such that ${\rm ICI}({\mathcal H})=\underline C$, and to also call them \emph{Hurwitz spaces}.
\end{notation}

\begin{remark}
The Inertia Canonical Invariant is also called \emph{multidiscriminant} by some authors. They equivalently define it as the application from the set of nontrivial conjugacy classes of $G$ to $\Nn$ that sends every conjugacy class of $G$ to the number of occurrences of it among the conjugacy classes of the branch cycles; see \hbox{e.g.} \cite[\S 2.3.6]{these-seguin}.
\end{remark}

\subsection{Patching} \label{sec:patching}
As said earlier, patching is a $p$-adic avatar of the Riemann-Hurwitz construction. The complex field $\Cc$ is replaced by the $p$-adic field $\Qq_p$, or more generally, by any complete valued field $(k,v)$ for a nontrivial nonarchimedean real-valued valuation $v$.  A famous application is the solution of the Regular Inverse Galois Problem over complete valued fields (see below) and more generally over \emph{large} fields (Section \ref{ssec:RIGP-large}).  

\subsubsection{Harbater's patching result} \label{ssec:patching}
Recall that an extension $N/k(T)$ has a \emph{totally $k$-rational} fiber above some point $t_0\in \Pp^1(k)$ if $N\subset k((T-t_0))$ (and $N\subset k((1/T))$ if $t_0=\infty$).

\begin{theorem} \label{thm:patching} Let $G$ be a finite group, $G_1,G_2$ be two subgroups of $G$. Suppose given, for $i=1,2$, a Galois extension $F_i/k(T)$, of group $G_i$, regular over $k$ and with some totally $k$-rational fiber. 
Then there exists a Galois extension $F/k(T)$  
regular over $k$, of group the subgroup $\langle G_1,G_2\rangle \subset G$ generated by $G_1,G_2$ and with some totally $k$-rational fiber.
\end{theorem}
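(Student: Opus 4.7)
The plan is to construct $F$ by rigid-analytic (equivalently, formal) patching on $\Pp^1_k$, gluing an induced $G$-cover built from $F_1$ on one affinoid with an induced $G$-cover built from $F_2$ on another, so that on the overlap both become trivialized by the totally $k$-rational fibers hypothesis. Completeness of $(k,v)$ is used essentially throughout.

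First I would normalize the data. After a $k$-rational M\"obius change of variable in $T$, I may arrange that the totally $k$-rational fiber of $F_1$ sits above $\infty$ and the totally $k$-rational fiber of $F_2$ sits above $0$. Since $v$ is nontrivial, $|k^\times|$ is cofinal in $\Rr_{>0}$, so after a further rescaling $T \mapsto \lambda T$ for a suitable $\lambda \in k^\times$ I can ensure that all branch points of $F_1$ lie in the open disk $\{|T|<1\}$ and all branch points of $F_2$ lie in $\{|T|>1\}$, with none on the circle $\{|T|=1\}$. Set $U_1=\{|T|\le 1\}$, $U_2=\{|T|\ge 1\}$, so $U_1 \cup U_2 = \Pp^{1,\mathrm{an}}_k$ and $A := U_1 \cap U_2 = \{|T|=1\}$ is the standard Tate annulus. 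These normalizations preserve $k$-regularity and the existence of the totally $k$-rational fibers.

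Next I would trivialize each cover on the complementary piece. The cover induced by $F_1$ is \'etale on $U_2$, Galois with group $G_1$, and has a $|G_1|$-point $k$-rational fiber at $\infty \in U_2$; since $U_2$ is connected, a standard argument (each connected component of a connected Galois cover with a rational point in one fiber must be isomorphic to the base) forces $F_1|_{U_2}$ to be the disjoint union of $|G_1|$ copies of $U_2$, that is, a \emph{trivial} $G_1$-torsor, hence trivial on $A$. Symmetrically, $F_2|_{U_1}$ is a trivial $G_2$-torsor, hence trivial on $A$. Now set $G = \langle G_1, G_2 \rangle$. Over $U_1$ I form the induced $G$-torsor $Y_1 = \mathrm{Ind}_{G_1}^{G}(F_1|_{U_1})$, consisting of $[G:G_1]$ copies of the $F_1$-cover permuted by left translation on cosets, and symmetrically $Y_2 \to U_2$ from $F_2$. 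By the previous step both $Y_1|_A$ and $Y_2|_A$ are trivial $G$-torsors, so a $G$-equivariant identification $Y_1|_A \simeq G \times A \simeq Y_2|_A$ can be fixed.

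Rigid-analytic patching (the finite-\'etale incarnation of Kiehl's GAGA, equivalently Harbater's formal patching of Galois covers) then glues these data into a finite $G$-Galois branched cover of $\Pp^{1,\mathrm{an}}_k$, ramified exactly above $S := \mathrm{branch}(F_1) \cup \mathrm{branch}(F_2)$; rigid GAGA on $\Pp^1_k$ algebraizes it to a $G$-Galois branched cover defined over $k$, corresponding to the sought extension $F/k(T)$. By construction the local monodromy on $U_1$ realizes $G_1$ and that on $U_2$ realizes $G_2$, so the global monodromy is all of $\langle G_1, G_2 \rangle = G$; this makes the upper cover connected, yields $\mathrm{Gal}(F/k(T)) = G$, and through the geometric connectedness of each local piece gives $k$-regularity. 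A totally $k$-rational fiber survives at any $k$-point of $A$ at which the gluing was explicitly trivialized. The main obstacle is precisely this patching-and-algebraization step: it is where completeness of $(k,v)$ is indispensable, and where one needs the substantive input of formal or rigid GAGA to pass from compatible local analytic data to a global algebraic cover. The remaining steps amount to bookkeeping once that machinery is in hand.
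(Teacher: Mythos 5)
Your overall strategy --- rigid/formal patching over the affinoid cover $\{|T|\le 1\}$, $\{|T|\ge 1\}$ of $\Pp^{1,\mathrm{an}}_k$, inducing each $G_i$-cover up to $G$, gluing along the unit annulus and algebraizing by rigid GAGA --- is exactly the Harbater--Liu route that the paper's comments describe, so the architecture is sound. But there is a genuine gap at the trivialization step. You claim that because $F_1|_{U_2}$ is finite \'etale over the connected affinoid $U_2$ and has a totally $k$-rational fiber at $\infty$, it must split into $|G_1|$ copies of $U_2$. This is false when the residue characteristic $p$ of $k$ divides $|G_1|$ --- which is precisely the case patching is designed to handle. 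For example, over $k=\Qq_p(\zeta_p)$ with uniformizer $\pi$, the cover $X^p = 1+\pi T^{-1}$ is \'etale on $\{|T|\ge 1\}$ (its branch point satisfies $|T|<1$) and has a totally $k$-rational fiber at $\infty$, yet the $p$-th root series $\sum_n \binom{1/p}{n}\pi^n T^{-n}$ does not converge on $|T|=1$, so the cover does not split over $U_2$. The closed disk is not ``simply connected'' for covers of degree divisible by $p$, and a split fiber at the center does not propagate by connectedness. What actually forces the splitting is a \emph{quantitative} condition: one must choose the normalizing M\"obius/rescaling constants so large (resp.\ small) that the $|G_i|$ Laurent series expansions of a primitive element at the totally rational fiber converge with radius $>1$ and their pairwise differences are units in $k\{T,T^{-1}\}$. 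This is exactly conditions (a),(b) in the paper's description of the specific patching operation; your rescaling only controls the location of the branch points, which is necessary but not sufficient.

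Two smaller points. First, a single rescaling $T\mapsto \lambda T$ cannot in general put the branch points of $F_1$ inside the unit disk and those of $F_2$ outside it simultaneously (the required inequality $\max_{S_1}|s|<|\lambda|<\min_{S_2}|s|$ may be unsatisfiable); you must apply independent transformations to the two extensions, as the paper does with its two constants $c_1$ (large) and $c_2$ (small). Second, the assertion that ``local monodromy realizes $G_i$, so global monodromy is $\langle G_1,G_2\rangle$'' is the deeper part of the argument, not bookkeeping: in the algebraic formulation it is where Cartan's matrix factorization lemma enters to show the $G$-action on the patched field is faithful, and in the geometric formulation one must compute $\pi_0$ of the glued cover as the orbit set of $\langle G_1,G_2\rangle$ acting on $G$. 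These points are repairable, but as written the proof does not go through for the groups and fields the theorem is really about.
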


Based on the classical fact that cyclic groups are Galois groups of extensions $F/k(T)$, regular over $k$ and with some totally $k$-rational fiber, we obtain this $p$-adic analog of the Inverse Galois Problem over $\Cc(T)$.
\vskip 2mm

\noindent
{\bf Regular Inverse Galois Problem over $\Qq_p$.} {\it For $k$ as above and typically for $k=\Qq_p$, every finite group is the group of some Galois extension of $k(T)$, regular over $k$ and with some totally $k$-rational fiber.}
\vskip 2mm

Theorem \ref{thm:patching} can be stated more geometrically by replacing everywhere the phrase ``Galois extension of $k(T)$, regular over $k$'' by ``connected $k$-G-cover of $\Pp^1$''. 

\begin{proof}[Comments on proofs]
Theorem \ref{thm:patching} is due to Harbater \cite{Harbater87}. His proof somehow reproduces the complex Riemann-Hurwitz strategy over the $p$-adics, by replacing complex analytic geometry by formal analytic geometry, GAGA theorems by analogous results of Grothendieck et al. Following an idea of Serre, Theorem \ref{thm:patching} was then reproved by Liu \cite{liu} who used rigid analytic geometry instead of formal analytic geometry. These approaches, known as {\it formal patching} and {\it rigid patching}, use the geometric environment of G-covers. They were next  recast in the pure algebraic environment of field extensions by Haran-V\"oelklein \cite{haran-voelklein}. The outcome, {\it algebraic patching}, is elementary and explicit. For example, the GAGA theorems come down to a matrix factorization lemma due to Cartan (which is the main place using completeness of the field $k$); and all constructions can be performed inside the fraction field of the ring
\vskip 1mm

\hskip 20mm $\displaystyle k\{T,T^{-1}\} = \left\{\sum_{n=-\infty}^{+\infty} a_n T^n \hskip 2pt \middle\vert \hskip 2pt a_n \in k \hskip 2mm (n\in \Zz) \hbox{ and } \lim_{|n|\rightarrow \infty} a_n = 0\right\}$. \hskip 20mm \end{proof}

\begin{remark} (a) Following Liu, Pop \cite{PopLarge} developed rigid patching and obtained a more general version of the result for \emph{embedding problems} (and not just realization problems). This led to his ``1/2-Riemann Existence Theorem'', which is a weak analog of the presentation as a free group of the topological fundamental group of a punctured sphere. 
\vskip 1mm

\noindent
(b) As shown in \cite{fehm-haran-paran}, the patching approach can also be used over $\Cc$ 
to give a simple self-contained proof of the Inverse Galois Problem over $\Cc(T)$.
\end{remark}

\subsubsection{The specific patching operation} With notation as in Theorem \ref{thm:patching}, assume that the extension $F_i/k(T)$ has a totally rational fiber above some point $t_{i0}\in \Aa^1(k)$ ($i=1,2$) and that $t_{10}\not= t_{20}$; possibly compose $T$ with some M\"obius transformations to fulfill this assumption. 
Suppose also given a primitive element ${\mathcal Y}_i$ of the extension $F_i/k(T)$, $i=1,2$.

Compose then $T$ in each extension $F_i/k(T)$ with this M\"obius transformation: 
\vskip 1mm
\centerline{$\displaystyle T \rightarrow \frac{t_{01}c_iT+t_{02}}{c_iT+1}$ with $c_i\in k$,\hskip 3mm for $i=1,2$.}
\vskip 1mm

This yieds a new extension $F_i/k(T)$ that has a totally rational fiber above $\infty$ for $i=1$, and above $0$ for $i=2$. Furthermore, if $|c_1|_v$ is suitably large and $|c_2|_v$ is suitably small, 
\vskip 0,5mm

\noindent
(a) the Laurent series  in $k((1/T))$ representing ${\mathcal Y}_1$ and its $k(T)$-conjugates ${\mathcal Y}_{1i}$ (resp.  the Laurent series  in $k((T))$ representing ${\mathcal Y}_2$ and its $k(T)$-conjugates ${\mathcal Y}_{2i}$) have a radius of convergence $>1$,

\noindent
(b) the nonzero differences ${\mathcal Y}_{1i}- {\mathcal Y}_{1j}$ and ${\mathcal Y}_{2i}- {\mathcal Y}_{2j}$ are invertible in the ring $k\{T,T^{-1}\}$.
\vskip 1mm

It follows from these conditions that $F_2/k(T)$ has totally rational fibers above all $t$ 
in an open disk $D_2$ containing the closed unit disk $\overline{D(0,1)}$. Consequently the branch points of $F_2/k(T)$ lie in the open unit disk $D(\infty,1)$.
Similarly $F_1/k(T)$ has totally rational fibers above all
$t$ in an open disk $D_1 \supset \overline{D(\infty,1)}$; hence, its branch points lie in $D(0,1)$.
\vskip 1mm

The patching operation attaches to this situation an explicit extension $F/k(T)$ satisfying the requirements of Theorem \ref{thm:patching}. See in particular \cite{haran-voelklein} that produces a specific Galois extension $F/k(T) \subset {\rm Frac}(k\{T,T^{-1}\})$ along with a group action $\varphi: G\rightarrow {\rm Aut}(F)$ and uses Cartan's lemma to show the deeper part of the argument that the action $\varphi$ is faithful. The final part shows that $F/k(T)$ has a totally $k$-rational fiber, and hence is regular over $k$. 
\vskip 1mm

Furthermore, the construction provides this additional conclusion.
\vskip 1mm

\noindent
{\bf Addendum to Theorem \ref{thm:patching}.} {\it The inertia canonical invariant of the extension $F/k(T)$ obtained by patching $F_1/k(T)$ and $F_2/k(T)$ is the disjoint union (or concatenation) of the inertia canonical invariants of $F_1/k(T)$ 
and $F_2/k(T)$.}

\begin{proof}[Comments on proof] What should be used from the construction of $F/k(T)$ is that if $t$ is a branch point of $F/k(T)$, then $|t|_v \not=1$, and
\vskip 0,5mm

\noindent
- either $|t|_v < 1$ and $t$ is not a branch point of $F_2/k(T)$, but is a branch point of $F_1/k(T)$, with same inertia groups (in fact $F k((T-t)) = F_1 k((T-t))$ up to $k(T)$-isomorphism), 
\vskip 0,5mm

\noindent
- or $|t^{-1}|_v < 1$ and $t$ is not a branch point of $F_1/k(T)$, but is a branch point of $F_2/k(T)$, with same inertia groups (in fact $F k((T-t)) = F_2 k((T-t))$ up to $k(T)$-isomorphism),
\vskip 0,5mm

\noindent
(If $|t|_v =1$, $t$ is not a branch point of $F/k(T)$; and, up to $k(T)$-isomorphism, we have $F k((T-t_0)) = F_1 k((T-t_0))= F_2 k((T-t_0)) = k((T-t_0))$.
\end{proof}

\begin{remark} Denote the branch point number of $F_i/k(T)$ by $r_i$ and the point on the marked Hurwitz space ${\rm Hur}_{r_i,G_i}$ representing $F_i/k(T)$ by $h_i$, $i=1,2$. Not all couples $(h_1,h_2)\in {\rm Hur}_{r_1,G_1}(k)\times {\rm Hur}_{r_2,G_2}(k)$ correspond to $k$-G-extensions
that can be patched. Their branch points should be in some special position, which the preliminarily M\"obius transformations make it possible to reach. Patching does not naturally induce a map 
\vskip 0,5mm

\centerline{${\rm Hur}_{r_1,G_1}\times {\rm Hur}_{r_2,G_2}\rightarrow {\rm Hur}_{r_1+r_2,\langle G_1,G_2\rangle}$.}
\vskip 0,5mm

\noindent
On the other hand, once the special position of the branch points is fulfilled, the patched G-extension $F/k(T)$ is quite precisely and algebraically defined from $F_1/k(T)$ and $F_2/k(T)$. 
\end{remark}

\subsection{Non-Galois context} \label{ssec:nonGalois}
This paper focusing on the use of Hurwitz spaces in inverse Galois theory,
we have restricted to Galois covers. It is however worth recalling that there is a Hurwitz space theory for not necessarily Galois covers as well. The main technical difference is that, for connected covers (for simplicity), the group $G$ is given with an embedding $G\rightarrow S_d$ (not necessarily the regular representation) and the surjective morphisms $\pi_1(\Pp^1\setminus \underline t,\infty) \rightarrow G$ should be replaced by transitive actions $\pi_1(\Pp^1\setminus \underline t,\infty) \rightarrow S_d$. The notation ${\rm Hur}_{r,d}$ replaces the notation  ${\rm Hur}_{r,G}$.
\vskip 1mm

The non-Galois context was that of Hurwitz original motivation, which was to prove \emph{the connectedness of the moduli space ${\mathcal M}_g$ of curves of genus $g$}. 
The argument goes as follows.

Algebraic considerations show that each complex genus $g\geq 2$ curve can be presented as a simply ramified cover of degree $d$ of $\Pp^1$, if $d\geq g+1$ (``simply'' meaning that all ramified fibers are of cardinality $d-1$). This 
gives rise to a continuous surjective map 
\vskip 0,5mm

\centerline{${\rm Hur}_{r,d}(2^r) \rightarrow {\mathcal M}_g$,} 
\vskip 0,5mm

\noindent
where ${\rm Hur}_{r,d}(2^r)$ is the subspace of ${\rm Hur}_{r,d}$ of all covers $X\rightarrow \Pp^1$ of monodromy group $S_d$ and $r$ branch points with $2$-cycles as branch cycles, for some integer $r=r(g,d)$.
The crucial point is that ${\rm Hur}_{r,d}(2^r)$ is connected as a consequence of the transitivity of the appropriate braid group action. Connectedness of ${\mathcal M}_g$ follows. This was improved by Severi, Fulton and Deligne-Mumford to show that ${\mathcal M}_g$ is an irreducible variety in all cha\-rac\-te\-ristics. A similar argument shows irreducibility of modular curves. See \cite[\S 2.4]{DeZakopane}.
\vskip 1mm

The general natural map ${\rm Hur}_{r,d} \rightarrow {\mathcal M}_g$ (where $r$, $d$ and $g$ are related by the Riemann-Hurwitz formula) is important for it connects two different worlds: covers have a topological origin while curves are algebraic objects. Questions about this map may be difficult (\hbox{e.g.} Question \ref{question-Mochizuki}  below) but rewarding. For example, works of V\"olklein \cite{MR2148464} on special loci of  ${\mathcal M}_g$ in connection with some of Cornalba \cite{MR932781} rely on this map. In a more arithmetic direction, the knowledge we have on the action
of ${\rm Gal}(\overline{\mathbb Q}/{\mathbb Q})$ on Hurwitz spaces can yield information on special loci of ${\mathcal M}_g$ and its stack structure; see \cite{collas-maugeais1} \cite{collas-maugeais2} and the anabelian \cite{tsujimura}. Another map from the Hurwitz space connects two different worlds, in the Galois context this time: it is the map ${\rm Hur}_{r,G} \rightarrow {\mathcal N}_G$ to the Noether variety ${\mathcal N}_G = \Aa^d/G$ with $d=|G|$, which lands in the territory of rationally connected varieties.

\begin{question} \label{question-Mochizuki} The following question was asked to me by S.~Mochizuki. In the Hurwitz original situation, the map ${\rm Hur}_{r,d}(2^r) \rightarrow {\mathcal M}_g$ is dominant, and 
it is generically non-quasi-finite (for dimension reasons).
On the contrary, in the case $d=2$, the map ${\rm Hur}_{r,d} \rightarrow {\mathcal M}_g$ is not dominant if 
$g\geq 3$ (not all curves are hyperelliptic); and it is quasi-finite if $g\geq 2$ (as degree $2$ covers $X \rightarrow\Pp^1$ are 
Galois and come from the finite list of auto\-mor\-phisms of $X$). The question is:
to what extent are there examples of components $\mathcal H\subset {\rm Hur}_{r,d}$ such that $\mathcal H \rightarrow {\mathcal M}_g$ is not dominant, but nevertheless generically non-quasi-finite?
\end{question}

Hurwitz spaces of not necessarily Galois covers may be of interest in an arithmetic per\-spec\-tive. That is the case when one looks for covers of $\Pp^1$ satisfying certain ramification constraints (on the branch points, on the inertia groups) and that are defined over $\Qq$ (or some other fixed field). The question reduces to finding rational points on some appropriate Hurwitz space.
Examples are given in  \cite{DeFr1}, \cite{DeFr4}; see also \cite[\S 4]{DeZakopane}.

\section{Semi-modern developments} \label{sec:further-developments}
This section discusses the following topics, which have become solid parts of Hurwitz space theory: rational points over large fields (Section \ref{ssec:RIGP-large}), integral scheme models, compactification (Section \ref{ssec:Hurwitz-scheme}), descent theory (Section \ref{ssec:descent}), modular towers (Section \ref{ssec:modular-towers}).

\subsection{Large fields} \label{ssec:RIGP-large}
Since Hurwitz spaces and patching have shown their potential for inverse Galois theory in the eighties, these two theories have developed side by side in a parallel way and have also often met. This section shows a first example.
\vskip 1mm

A main point is that $k$-G-covers obtained by patching provide $k$-rational points on appropriate Hurwitz spaces. The next idea is to use them for finer arithmetic conclusions, such as identifying points or components defined over small fields.
First such investigations emerged in the 1990s. Papers \cite{DeFr3} (for the real case $k=\Rr$) and \cite{DeSeattle} (for the $p$-adic case $k=\Qq_p$), conjoined with \cite{BrunoSeattle}, show that not only every finite group $G$ is the group of some $k$-G-extension 
$F/k(T)$, but that also the following is true.
\vskip 1mm

\begin{theorem} \label{thm:thm-Qtr-Qtp} For every finite group $G$, one can construct an integer $r\geq 2$ and a component $\mathcal H\subset {\rm Hur}_{r,G}$ that is \emph{defined over $\Qq$} and has $\Rr$-rational points (when $k=\Rr)$ and $\Qq_p$-rational points
(when $k=\Qq_p$).
\end{theorem}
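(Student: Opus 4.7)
The plan is to invoke the patching machinery of Theorem \ref{thm:patching} with carefully chosen cyclic building blocks to produce, over $k$, a Galois extension whose class in $\mathrm{Hur}_{r,G}(k)$ lies on a component with globally $\Qq$-rational inertia data, and then to promote the Galois-invariance of the inertia to Galois-invariance of the component itself.

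First, I would fix a generating set $g_1,\ldots,g_s$ of $G$, and denote by $e_i$ the order of $g_i$. A classical construction (Kummer/Artin--Schreier, or patching of degree-$e_i$ cyclic pieces already valid over $\Qq$) shows that, for each $i$ and each $m\in(\Zz/e_i\Zz)^\times$, the cyclic group $\langle g_i\rangle$ is the Galois group of an extension of $k(T)$, regular over $k$, with a totally $k$-rational fiber and with prescribed inertia canonical invariant $(C_{g_i^m},C_{g_i^{-m}})$; this is available both for $k=\Qq_p$ and, after minor adaptations, for $k=\Rr$. Iteratively patching these cyclic extensions via Theorem \ref{thm:patching} yields a regular Galois extension $F/k(T)$ of group $\langle g_1,\ldots,g_s\rangle=G$, with a totally $k$-rational fiber, hence a $k$-rational point $h\in\mathrm{Hur}_{r,G}(k)$ for some $r$, of marked ``Harbater--Mumford'' type. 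By the Addendum to Theorem \ref{thm:patching}, its inertia canonical invariant is the concatenation
\vskip 1mm
\centerline{$\underline C=\bigsqcup_{i=1}^{s}\bigsqcup_{m\in(\Zz/e_i\Zz)^\times}(C_{g_i^m},C_{g_i^{-m}})$,}
\vskip 1mm
whose multiset of classes is, by construction, stable under raising to any prime-to-$|G|$ power; equivalently, $\underline C$ is globally $\Qq$-rational in the sense of \S\ref{ssec:ici}.

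Let $\mathcal H\subset\mathrm{Hur}_{r,G}$ denote the geometrically irreducible component containing $h$. By relation (*) of \S\ref{ssec:ici}, for each $\tau\in\mathrm{Gal}(\overline\Qq/\Qq)$ we have $\mathrm{ICI}(\mathcal H^\tau)=\underline C^{\chi(\tau)^{-1}}=\underline C$, so $\mathcal H^\tau$ shares its inertia data with $\mathcal H$. To conclude $\mathcal H^\tau=\mathcal H$, I would argue, along the lines of \cite{DeFr3, DeSeattle, BrunoSeattle}, that within the Nielsen class cut out by $\underline C$, the braid orbit of $h$ already absorbs every Harbater--Mumford tuple obtained from it by the cyclotomic twist. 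Concretely, one exhibits explicit braid moves (combined with inner conjugation by $G$, which for connected covers is realized by braids via property (*) of Remark \ref{rem:marked-unmarked}) that take $h$ to representatives of the form $(g_i^{a_i},g_i^{-a_i})_{i,m}$ for any choice of units $a_i\in(\Zz/e_i\Zz)^\times$. This forces the braid orbit, hence $\mathcal H$, to be preserved by $\mathrm{Gal}(\overline\Qq/\Qq)$, so $\mathcal H$ is defined over $\Qq$; and it contains the $k$-rational point $h$ produced by patching.

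The main obstacle is this last step: passing from Galois-invariance of the inertia canonical invariant to Galois-invariance of $\mathcal H$ itself. The set of components sharing a given $\Qq$-rational ICI may be strictly larger than one, so one genuinely needs the combinatorial analysis of braid orbits of Harbater--Mumford tuples and the realization of the cyclotomic twist by explicit braid moves. By contrast, the patching construction of $h$ and the verification that $\underline C$ is globally $\Qq$-rational are essentially routine consequences of Theorem \ref{thm:patching} and its addendum.
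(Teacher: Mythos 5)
Your construction of the $k$-rational point $h$ by patching cyclic pieces, and the verification that the resulting inertia canonical invariant $\underline C$ is globally $\Qq$-rational, are fine (modulo the caveat that Theorem \ref{thm:patching} as stated concerns nonarchimedean complete fields, so the case $k=\Rr$ really does require the separate complex-conjugation analysis of \cite{DFr90,DeFr3} rather than ``minor adaptations''). The genuine gap is in the final step, and it is not merely a computation you have postponed: the Branch Cycle Lemma tells you only that $\mathcal H^\tau$ is \emph{some} component with ${\rm ICI}$ equal to $\underline C^{\chi(\tau)^{-1}}=\underline C$. It does \emph{not} tell you that $\mathcal H^\tau$ is the braid orbit of a cyclotomically twisted Harbater--Mumford tuple $(g_i^{a_i},g_i^{-a_i})_{i,m}$. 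Identifying $\mathcal H^\tau$ within the (possibly large) set of components sharing the ICI $\underline C$ requires a geometric input that your argument omits: either one shows that the set of HM-type components is globally Galois-stable, via the degeneration/admissible-cover characterization at the boundary of the compactified Hurwitz space or its patching reformulation (this is Theorem \ref{thm:cau}), or one collapses the entire Nielsen class ${\rm Ni}_{r,G}(\underline C)$ into a single braid orbit. Purely combinatorial braid moves starting from $h$ cannot substitute for this.

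Moreover, even granting the geometric input, your claim that explicit braid moves absorb all the relevant twists is too strong. Braid moves do permute blocks with trivial product and realize \emph{global} inner conjugation (Remark \ref{rem:marked-unmarked}(*)), but they do not realize \emph{independent} conjugation of the individual blocks $(g_i,g_i^{-1})$ by distinct elements $\gamma_i\in G$. This is exactly why Theorem \ref{thm:cau}(b) yields a Galois orbit of possibly many distinct components $x_1^{\gamma_1}\cdots x_n^{\gamma_n}$, and why Theorem \ref{thm:seguin2023}(c) only concludes that \emph{some} twist is defined over $k$. The paper's proof sidesteps both difficulties by taking the Conway--Parker--Fried--V\"olklein route: it enlarges $\underline C$ so that each class occurs sufficiently often (and replaces $G$ by a covering group with trivial Schur multiplier), which forces ${\rm Hur}_{r,G}(\underline C)$ to be irreducible; the unique component is then trivially defined over $\Qq$, and patching (resp.\ the complex-conjugation analysis) supplies the $\Qq_p$- (resp.\ $\Rr$-) points on it. If you want to pursue your HM-flavored route instead, you must import Theorem \ref{thm:cau} (or the Fried/D\`ebes--Emsalem results it generalizes) and then verify the coincidence of all the conjugation-twisted HM components, which is a nontrivial hypothesis, not an automatic consequence of including all the powers $g_i^m$.
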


\begin{proof}[{Comments of proof of Theorem \ref{thm:thm-Qtr-Qtp}}] The proof combines several ideas. 

A first one is that a necessary condition for a component $\mathcal H$ to be defined over $\Qq$ is that the invariant ${\rm ICI}(\mathcal H)$ be globally $\Qq$-rational (\S \ref{ssec:ici}). This can easily be \hbox{achieved by} working with covers with inertia canonical invariant  $\underline C$ such that if some conjugacy class $C$ of $G$ occurs, then all prime-to-$G$ powers of $C$ occur the same number of times. 
\vskip 0,5mm 

For such an unordered tuple $\underline C=(C_1,\ldots,C_r)$ of conjugacy classes, an easy way to obtain a component $\mathcal H\subset {\rm Hur}_{r,G}(\underline C)$ defined over $\Qq$ is to guarantee that ${\rm Hur}_{r,G}(\underline C)$ itself is irreducible (and then take $\mathcal H={\rm Hur}_{r,G}(\underline C)$), \hbox{i.e.}, to guarantee that the braid group acts transitively on the subset of the 
Nielsen class with ICI equal to ${\underline C}$:
\vskip 1mm

\centerline{${\rm Ni}_{r,G}(\underline C)= \{ (g_1\ldots,g_r) \in {\rm Ni}_{r,G}\hskip 2pt | \hskip 2pt g_i\in C_{\omega(i)}\ \hbox{\rm for some } \omega \in S_r\}$.}
\vskip 1mm

\noindent
Such a transitivity property is the point of the so-called \emph{Conway-Parker-Fried-V\"olklein theorem} (see Theorem \ref{thm:CPFV} and Remark \ref{rem:CPFV}): under some assumption which one can always reduce to, transitivity holds provided that each nontrivial conjugacy class of $G$ occurs suitably often in $\underline C$. As before, it suffices to throw 
in as many suitable conjugacy classes as necessary to reach the condition.
\vskip 0,5mm

The final step is to show that the corresponding component $\mathcal H$ has real points and $p$-adic points for all $p$, \hbox{i.e.} that G-covers defined over $\Rr$ and all $\Qq_p$ can be constructed with group $G$ and 
inertia canonical invariant
equal to the constructed tuple $\underline C$. For the $p$-adic case, patching is the key (Theorem \ref{thm:patching}). For the real case, one uses results on the action of complex conjugation on the complex covers produced by the Riemann-Hurwitz strategy (\cite{DFr90},\cite{DeFr3}). A slightly finer analysis (performed in \cite{BrunoSeattle}) and some adjustments on $\underline C$ are necessary to make $r, \underline C, \mathcal H$ the same for all primes $p$ and for the reals.
\end{proof}

Interestingly enough, Theorem \ref{thm:thm-Qtr-Qtp} could then be combined with some general result on rational points on varieties 
to the effect that if $V$ is a smooth geometrically irreducible variety $V$,
defined over $\Qq$, with real points (\hbox{resp.} $p$-adic points), then $V$ has totally real points (\hbox{resp.} totally $p$-adic points). Such a result appears in a paper of Moret-Bailly \cite{Moret-Bailly-SkolemII}, which improves on works of Cantor-Roquette, Rumely, Green-Pop-Roquette.

We obtain the following result. We denote by $\Qq^{\rm tr}$ (\hbox{resp.} $\Qq^{{\rm t}p}$)
 the field of \emph{totally real algebraic numbers}, \hbox{i.e.} whose all conjugates are real (\hbox{resp.} \emph{totally $p$-adic algebraic numbers}, \hbox{i.e.} whose all conjugates are $p$-adic).

\begin{corollary}[\cite{DeFr3}, \cite{DeSeattle}] \label{RIGP-large}
With $k= \Qq^{\rm tr}$ or $k=\Qq^{{\rm t}p}$, every finite group is the group of some Galois  extension of $k(T)$, regular over $k$, with some totally $k$-rational fiber.
\end{corollary}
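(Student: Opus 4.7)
The plan is to combine Theorem \ref{thm:thm-Qtr-Qtp} with the geometric strong approximation result of Moret-Bailly \cite{Moret-Bailly-SkolemII}, exactly along the lines flagged in the paragraph preceding the corollary. Throughout, I would work with the \emph{marked} Hurwitz space, so that a $k$-rational point on a component automatically yields a $k$-regular Galois extension with a totally $k$-rational fiber above $\infty$ (Remark \ref{rem:marked-unmarked1}, Proposition \ref{prop:Riemann-Hurwitz-descent}).

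First I would apply Theorem \ref{thm:thm-Qtr-Qtp} to the given finite group $G$ to produce an integer $r \geq 2$ and a component $\mathcal{H} \subset {\rm Hur}_{r,G}$ that is defined over $\Qq$ and has $\mathcal{H}(\Rr) \neq \emptyset$ in the case $k = \Qq^{\rm tr}$, or $\mathcal{H}(\Qq_p) \neq \emptyset$ in the case $k = \Qq^{{\rm t}p}$. By Theorem \ref{thm-hurwitz-space}, $\mathcal{H}$ is a smooth, geometrically irreducible quasi-projective variety over $\Qq$, so Moret-Bailly's theorem applies and upgrades the local rational point to a totally local one: there exists a point $h \in \mathcal{H}(\Qq^{\rm tr})$ (resp. $h \in \mathcal{H}(\Qq^{{\rm t}p})$), since by definition of $\Qq^{\rm tr}$ and $\Qq^{{\rm t}p}$ such a point is nothing but a $\overline{\Qq}$-point of $\mathcal{H}$ whose every $\Qq$-conjugate lies in $\mathcal{H}(\Rr)$ (resp. $\mathcal{H}(\Qq_p)$).

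Finally, since $\mathcal{H}$ lies in the marked Hurwitz space, the point $h$ corresponds to a connected $k$-G-cover $f: X \to \Pp^1_k$ of group $G$ with a totally $k$-rational fiber above $\infty$, for $k = \Qq^{\rm tr}$ or $k = \Qq^{{\rm t}p}$. Its function field extension $N_0/k(T)$ is then a Galois extension of group $G$, regular over $k$, with a totally $k$-rational fiber above $\infty$, which is the desired conclusion. The main obstacle is not internal to this corollary: once Theorem \ref{thm:thm-Qtr-Qtp} is available, the passage from local to totally local rational points is immediate from Moret-Bailly. The genuine work lies upstream, in constructing $\mathcal{H}$ over $\Qq$ with simultaneous local points at the relevant place — which combines the Conway-Parker-Fried-V\"olklein transitivity theorem (to force a $\Qq$-defined component via a globally $\Qq$-rational ICI), patching (Theorem \ref{thm:patching}) for the $p$-adic case, and the complex-conjugation analysis of \cite{DFr90,DeFr3} for the real case.
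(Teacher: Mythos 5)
Your proposal is correct and follows exactly the route the paper takes: Theorem \ref{thm:thm-Qtr-Qtp} supplies a $\Qq$-component with a real (resp.\ $p$-adic) point, Moret-Bailly upgrades this to a $\Qq^{\rm tr}$- (resp.\ $\Qq^{{\rm t}p}$-) rational point, and the marked-cover descent (Proposition \ref{prop:Riemann-Hurwitz-descent} with Remark \ref{rem:marked-unmarked1}) converts that point into the desired $k$-regular Galois extension with a totally $k$-rational fiber. Your added care about working in the marked setting and about the component parametrizing connected covers is exactly what the paper leaves implicit.
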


These fields are the ``smallest'' fields of characteristic $0$ over which the regular Inverse Galois Problem 
is known to hold.

\begin{remark}[Harbater-Pop \hbox{vs.} D\`ebes-Fried] (a) Corollary  \ref{RIGP-large} was generalized by Pop \cite{PopLarge} to the case that $k$ is an {\it ample} field, \hbox{i.e.}, a field $k$ with the property that every smooth geometrically irreducible curve defined over $k$ has infinitely many $k$-rational points provided it has at least one (Pop calls them ``large'' instead of ``ample'').  Ample fields do include $\Qq^{\rm tr}$ and $\Qq^{{\rm t}p}$: this is in fact what 
Moret-Bailly shows. Other examples are separably closed fields, complete or henselian valued fields, PAC fields. 
\vskip 1mm

\noindent
(b) Pop's approach is different in that he uses patching but not Hurwitz spaces. 
Specifically he uses patching to solve the regular Inverse Galois Problem over the complete field $k((x))$, then
interprets this as producing, for every finite group $G$, a family of G-covers parametrized by a geometrically irreducible variety $V$ such that $k(V)\subset k((x))$ and finally concludes, from the definition of ample fields, that $V$ has many $k$-rational points. An equivalent characterization of ample fields is indeed that they are  existentially closed in $k((x))$. Thus he can conclude that many of the G-covers of the family are in fact $k$-G-covers. For more details on this descent argument, see  \cite[\S 4.2]{DeDes1}.
\vskip 1mm

\noindent
(c) The Hurwitz space approach can also be used to show the general case of Corollary \ref{RIGP-large} that $k$ is an ample field (of characteristic $0$). Namely, using patching over the field $k((x))$ rather than $\Qq_p$, the component $\mathcal H$ from the proof of Theorem \ref{thm:thm-Qtr-Qtp} can be constructed so to have $k((x))$-points. Hence it has $k$-rational points if $k$ is existentially closed in $k((x))$. 
\vskip 1mm

\noindent
(d) Pop's result is more generally concerned with {\it embedding problems} and not just realization problems. It is unclear whether this generalization can be handled with the Hurwitz space approach although Fried-V\"olklein indeed do it in \cite{FV92} but only over PAC fields.
\end{remark}

\subsection{Integral scheme models and compactification} \label{ssec:Hurwitz-scheme}
Hurwitz spaces can further be defined as schemes and can be compactified. We provide the main conclusions, which are due to Wewers \cite{WewersThesis}, \cite{WewersGainesville} and Romagny-Wewers \cite{RoWe}. They have notably been used in \cite{DeEm04}, \cite{cau2012,cau2016} where a longer introduction to them is given.

\begin{theorem} \label{thm:wewers-romagny}
There exists a smooth scheme $\mathcal{H}ur_{r,G}$ of finite type over $\Zz[1/|G|]$ and a normal compactification 
$\overline{\mathcal{H}ur_{r,G}}$ such that:
\vskip 0,5mm

\noindent
{\rm (a)} $\mathcal{H}ur_{r,G} \otimes_{\Zz[1/|G|]} \Cc = \hbox{\rm Hur}_{r,G}$, 
\vskip 1mm

\noindent
and,  for each prime $p \not |  \hskip 3pt |G|$,
\vskip 0,3mm

\noindent
{\rm (b)} the fiber $\mathcal{H}_p=\mathcal{H}ur_{r,G} \otimes_{\Zz[1/|G|]} {\Ff_p}$ is a smooth variety and its $\overline{\Ff_p}$-components correspond to those of $\hbox{\rm Hur}_{r,G}$ reduced modulo $p$,
\vskip 0,3mm

\noindent
{\rm (c)} $\mathcal{H}_p$ is a moduli space for G-covers of $\Pp^1$ with $r$ branch points over algebraically closed fields of characteristic $p$.
\vskip 0,8mm

\noindent
Furthermore,
\vskip 0,3mm

\noindent
{\rm (d)} the morphism $\hbox{\rm Hur}_{r,G} \rightarrow {\rm Conf}_r$ extends to a morphism 
\vskip 1mm
\centerline{$\overline{\mathcal{H}ur_{r,G}} \rightarrow \overline{{\rm Conf}_r}$}
\vskip 1mm
\noindent
where  ${\rm Conf}_r$ is viewed as a scheme over $\Zz$ and $\overline{{\rm Conf}_r}$ is its Harris-Mumford com\-pactification. 
\end{theorem}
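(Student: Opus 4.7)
The plan is to construct $\mathcal{H}ur_{r,G}$ as the scheme (after a rigidification in the unmarked case) representing the moduli functor of tame G-covers on $\Zz[1/|G|]$-schemes, and then to compactify it by allowing simultaneous controlled degenerations of the base $\Pp^1$ (into a tree of $\Pp^1$'s) and of the G-cover (into an admissible cover \`a la Harris--Mumford).

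First I would set up the moduli problem: to each scheme $S$ over $\Zz[1/|G|]$, associate the set of isomorphism classes of data $(\mathcal X \to \Pp^1_S,\, G \to {\rm Aut}(\mathcal X/\Pp^1_S))$ where $\mathcal X \to \Pp^1_S$ is finite flat, tamely ramified along a relative Cartier divisor $D \subset \Pp^1_S$ that is finite \'etale over $S$ of degree $r$, \'etale outside $D$, and on which $G$ acts freely and transitively on geometric fibres over the unramified locus; the marked version carries in addition a section $S \to \mathcal X$ above $\infty_S$. The configuration scheme ${\rm Conf}_r$ is the open subscheme of $(\Aa^1_\Zz)^r$ (resp.\ of its $S_r$-quotient) of ordered (resp.\ unordered) $r$-tuples of distinct points, smooth over $\Zz$.

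Representability and smoothness come from the rigidity of tame covers with fixed branch locus (SGA 1, Exp.~XIII): the forgetful morphism $\mathcal{H}ur_{r,G} \to {\rm Conf}_r$ is relatively \'etale, so by \'etale descent the moduli problem is represented by an algebraic space \'etale over the smooth scheme ${\rm Conf}_r$; finiteness of the Nielsen class bounds the fibres, giving finiteness of the morphism and a quasi-projective scheme structure. Part (a) is the Riemann existence theorem combined with Serre's GAGA: the complex-analytic and algebraic moduli problems agree on $\Cc$-schemes, and both represent the analytic Hurwitz space built in \S\ref{ssec:conf-space-H-space}. Part (c) is built into the moduli definition over arbitrary residue fields. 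Part (b) then follows from Grothendieck's specialization theorem for tame fundamental groups (SGA 1, Exp.~XIII): for $p \nmid |G|$, the group $\pi_1^{\rm t}(\Pp^1_{\overline\Ff_p} \setminus \underline t)$ is the prime-to-$p$ completion of the topological $\pi_1(\Pp^1(\Cc) \setminus \underline t_0, \infty)$ for any characteristic-zero lift $\underline t_0$, and this isomorphism is braid-equivariant on Nielsen classes; hence geometric components of the generic fibre specialize bijectively to those of $\mathcal H_p$.

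The main obstacle is (d). Following Wewers and Romagny--Wewers, I would extend the moduli problem to \emph{admissible G-covers} $\mathcal X \to \mathcal C$, where $\mathcal C \to S$ is a stable $r$-pointed curve of genus $0$ (a tree of $\Pp^1$'s with smooth marked sections), $\mathcal X$ is a nodal curve with a G-action, the map is G-equivariant, tame, and \'etale away from the nodes and marked sections, and the inertia at each node of $\mathcal X$ above a node of $\mathcal C$ is \emph{balanced}: the two tame cyclic inertia groups attached to the local branches coincide and their canonical generators are mutual inverses. This extended functor naturally lives over $\overline{{\rm Conf}_r}$ (essentially $\overline{M_{0,r}}$ symmetrized by $S_r$). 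The delicate point is to show representability by a normal scheme finite over $\overline{{\rm Conf}_r}$: one constructs formal local models along the boundary by gluing pairs of tame covers of formal discs with matched inertia, then checks separation, normality, and the dictionary between admissible degenerations and irreducible components of the boundary fibres. The prime-to-$|G|$ hypothesis is essential here, both to make the local inertia cyclic and the balancing condition unambiguous, and to guarantee that the resulting boundary is a normal crossings divisor. Compatibility of the extended morphism with its interior restriction $\mathcal{H}ur_{r,G} \to {\rm Conf}_r$ is automatic from the construction.
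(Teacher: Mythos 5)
The paper gives no proof of this theorem: it is quoted as a known result with references to Wewers' thesis, his Gainesville article, and Romagny--Wewers, so there is nothing internal to compare your argument against. Your sketch is nonetheless a faithful outline of the strategy actually used in those references --- tame-cover moduli over $\Zz[1/|G|]$ made \'etale over ${\rm Conf}_r$, GAGA plus the Riemann existence theorem for (a), Grothendieck's specialization of the tame fundamental group for (b), and admissible G-covers with balanced inertia at the nodes over $\overline{M_{0,r}}/S_r$ for (d) --- and you correctly flag the two genuinely delicate points (rigidification in the unmarked case when $G$ has center, and representability/normality of the boundary functor), which is where the real work in Wewers and Romagny--Wewers lies.
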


The set $\overline{{\rm Conf}_r} \setminus {\rm Conf}_r$ parametrizes {\it stable marked curves of genus $0$ with a root}, 
\hbox{i.e.} trees of curves of genus $0$ with a distinguished component $T_0 \simeq \Pp^1$ (the {\it root}) and at least three marked points (including the double points) on any component but the root.} Points in  $\overline{\mathcal{H}ur_{r,G}} \setminus {\mathcal{H}ur_{r,G}}$ correspond to certain degenerate covers of these singular curves.

\vskip 2mm
The last two subsections briefly discuss two topics where Hurwitz spaces appear but go beyond pure inverse Galois theory:
they exist both in the context of Galois and non-Galois covers. 
These topics are classical and much documented in the literature. 
We refer to basic references for more details.

\subsection{Descent theory} \label{ssec:descent} 
The main point connecting descent theory and Hurwitz spaces is that for $r$ and $G$ as above, and for any field $k$ of characteristic $0$, typically for $k=\Qq$,
\vskip 1mm

\noindent
(*) {\it the field of definition of the point $[f]$ on the (not necessarily irreducible) $k$-variety ${\rm Hur}_{r,G}$, which represents the isomorphism class of a $\overline k$-G-cover $f:X\rightarrow \Pp^1$, is the \emph{\it field of moduli} of $f$ (relative to the extension $\overline k/k$).}
\vskip 1mm

\noindent
Recall that the field of moduli of $f$ is the subfield of $\overline k$ fixed by the subgroup of all $\tau \in {\rm Gal}(\overline k/k)$ such that the two G-covers $f$ and $f^\tau$ are isomorphic; and so (*) follows from Theorem \ref{thm-hurwitz-space}(b). The field of moduli is contained in every field of definition of $f$ but need not be a field of definition itself although it is in some classical situations \cite{DeDo1}.
\vskip 1mm

There is a second descent problem
attached to Hurwitz space theory. 
The question is whether there is a {\it universal} family of G-covers parametrized by the Hurwitz space ${\rm Hur}_{r,G}$, i.e., such that every family of G-covers with the same invariants $r$, $G$ is a pull-back of that family. As the field of moduli for the descent of fields of definition, the Hurwitz space is the ``most natural'' space that can parametrize a universal family of G-covers. This descent problem is a general variant of the more common question of whether the Hurwitz space, which is a coarse moduli space, is 
a {\it fine} moduli space. A basic result is

\begin{proposition} \label{prop:basic-descent} The field of moduli is a field of definition, and the Hurwitz space is a parameter space for a universal family, and even is a fine moduli space, in the following situations: {\rm (a)} marked G-covers, and, {\rm (b)}
unmarked G-covers if $G$ has trivial center.
\end{proposition}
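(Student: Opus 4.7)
\emph{Proof plan.} The approach I would take is to reduce both statements to a single underlying fact: the objects being parametrized have trivial automorphism group. Triviality of $\mathrm{Aut}$ is the standard Galois-cohomological criterion that makes descent of objects automatic and upgrades coarse moduli to fine moduli.

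\textbf{Step 1: Triviality of automorphisms.} In case (a), the very definition of isomorphism in \S\ref{ssec:isomorphisms} forces morphisms between marked G-covers to be identities. Geometrically: an automorphism of $f:X\to\Pp^1_{\overline k}$ commuting with the $G$-action and preserving a chosen point in the totally split fiber over $\infty$ must be the identity on that fiber (the $G$-action is free and transitive), hence is the identity globally. In case (b), an automorphism of the unmarked G-cover represented by $\varphi:\pi_1(\Pp^1(\Cc)\setminus\underline t,\infty)\to G$ is an element $g\in G$ with $g\varphi g^{-1}=\varphi$, i.e., an element of the centralizer $Z_G(\varphi(\pi_1))$. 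For connected covers $\varphi$ is surjective, so this centralizer is $Z(G)$, which vanishes by hypothesis.

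\textbf{Step 2: Field of moduli is a field of definition.} Let $f$ be a $\overline k$-G-cover whose isomorphism class is fixed by $\mathrm{Gal}(\overline k/k)$. For each $\tau$ choose an isomorphism $i_\tau:f^\tau\to f$; by Step 1 it is unique. Both $i_{\sigma\tau}$ and $i_\sigma\circ(i_\tau)^\sigma$ are isomorphisms $f^{\sigma\tau}\to f$, hence coincide, so the cocycle condition holds automatically. Weil's descent theorem then produces a $k$-form $f_0$ of $f$.

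\textbf{Step 3: Fine moduli.} The triviality of automorphisms upgrades the moduli functor of G-covers (marked in (a), unmarked in (b)) to a genuine sheaf: any two families of G-covers over a scheme $S$ which become isomorphic fpqc-locally are isomorphic, uniquely, on $S$. Combined with the existence of the coarse moduli scheme $\mathrm{Hur}_{r,G}$ (Theorem \ref{thm-hurwitz-space}), this forces $\mathrm{Hur}_{r,G}$ to represent the functor. The universal G-cover over $\mathrm{Hur}_{r,G}$ is then built by repeating the descent of Step 2 in families over \'etale charts of $\mathrm{Hur}_{r,G}$ and gluing.

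\textbf{Main obstacle.} Step 1 is immediate from the definitions, and Step 2 is a direct application of Weil descent once Step 1 is in place. The substantive work is Step 3: globalizing the descent over $\mathrm{Hur}_{r,G}$ itself and constructing the universal G-cover as an honest geometric object. With trivial automorphism groups the relevant higher cohomological obstructions vanish, so the argument is formal and can be couched in terms of standard effective descent (e.g.\ Grothendieck's FGA, or the scheme-theoretic Hurwitz machinery of Wewers--Romagny recalled in Theorem \ref{thm:wewers-romagny}); still, writing down the gluing data explicitly over the various charts of $\mathrm{Hur}_{r,G}$ is the only piece that requires real bookkeeping.
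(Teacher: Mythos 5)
Your argument is correct and is the standard rigidity argument: trivial automorphism groups make the descent cocycle condition automatic (Weil descent, giving ``field of moduli $=$ field of definition'') and turn the coarse moduli space into a fine one. The paper itself offers no proof of Proposition \ref{prop:basic-descent} and simply defers to \cite{De-Berkeley}, so there is nothing to contrast with; your Steps 1--3 are exactly the reasoning that reference formalizes. One small caveat: in case (b) your computation of the automorphism group as $Z(G)$ uses surjectivity of $\varphi$, i.e.\ connectedness of the G-cover; for a disconnected unmarked G-cover the automorphism group is the possibly larger centralizer $Z_G(\varphi(\pi_1))$, so the statement (and your proof) should be read as applying to connected covers, which is the situation relevant to the descent discussion in \S\ref{sec:descent}.
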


For more refined results and for references, we refer to \cite{De-Berkeley} where these two problems are treated in parallel and concrete criteria and related results are given. 

\subsection{Modular towers} \label{ssec:modular-towers} 
Modular towers, which were introduced by M.~Fried, constitute a vertical development of the Hurwitz space theory. Main Fried's references are \cite{Fr_introMT} and {\cite{BaFr}; \cite{DeLy04} and \cite{DeMT} provide a detailed introduction to the topic. 
\vskip 1mm
 
 A modular tower is a tower of moduli Hurwitz spaces $({\rm Hur}_{r,G_n}({\underline C}_n))_{n\geq 0}$ (with maps
going down) where the branch point number $r\geq 3$ is fixed and $(G_n,{\underline C}_n)_{n\geq 0}$ is a projective system  of groups $G_n$ given with an $r$-tuple ${\underline C}_n$ of conjugacy classes of order
prime to some prime divisor $p$ of $|G|$.
\vskip 1mm

The motivating example is the tower $(Y^1(p^n))_{n\geq 1}$ of modular curves. More specifically take $G_n=D_{p^n}$ the dihedral group of order $2p^n$, with $p\not=2$, and ${\underline C}_n$ the $4$-tuple consisting of the involution class of $D_{p^n}$, repeated $4$ times. A classical argument (\hbox{e.g.} \cite[\S 1.3]{DeLy04}) shows that there is a surjective morphism, defined over $\Qq$, 
\vskip 1mm

\centerline{$({\rm Hur}_{4,D_{2p^n}}(\underline C_n))_{n\geq 1} \rightarrow (Y^1(p^n)\setminus \{\hbox{\rm \small{cusps}}\})_{n\geq 1}$.}
\vskip 1mm

Fried's generalized setting starts with a fixed finite group $G$, a prime divisor $p$ of $|G|$ and
an unordered $r$-tuple ${\underline C}_1$ of conjugacy classes of $G$ of prime-to-$p$ order. The levels $G_n$ then correspond to the \emph{characteristic quotients} of the \emph{universal $p$-Frattini cover $s: {}_p \widetilde G\rightarrow G$}. That is: the group homomorphism $s$ is surjective, is Frattini (for any subgroup $H^\prime \subset H$, if  $s(H^\prime)= G$ then $H^\prime = H$), its kernel is a $p$-group, and $s$ is ``universal'' \hbox{w.r.t.} these properties. The levels $G_n$ are the quotients ${}_p \widetilde G/\widetilde P_n$, where the groups $\widetilde P_n$ are inductively defined by
\vskip 1mm

\centerline{\small $\left\{\begin{matrix} 
\widetilde P_1 = {\rm ker}({}_p \widetilde G \rightarrow G) \hfill \cr
\widetilde P_n = {\rm Fratt}(\widetilde P_{n-1}) = \widetilde P_{n-1}^p [\widetilde P_{n-1},\widetilde P_{n-1}].  \cr
\end{matrix}\right.$}

The groups $\widetilde P_n$ are pro-$p$-groups of finite rank and the levels $G_n$ are finite. In the motivating example, we have $\widetilde G = \Zz_p\rtimes \Zz/2\Zz$ and $\widetilde P_n=p^n \Zz_p$. As to the unordered tuples ${\underline C}_n$, they are obtained from $\underline C_1$ thanks to a Schur-Zassenhaus argument to the effect that each $g\in G_1=G$ of prime-to-$p$ order uniquely lifts, up to conjugation, to some element in $G_2$ with the same order, and then similarly in $G_3$, etc.
\vskip 1mm

Persistence of rational points on levels ${\rm Hur}_{r,G_n}(\underline C_n))$
is the main diophantine question on modular towers. 

\begin{conjecture}[Fried 1995] \label{conj:fried}
If $k$ is a number field, there exists an integer $n_0$ such that for every integer $n\geq n_0$, there are no $k$-rational points on ${\rm Hur}_{r,G_n}(\underline C_n)$.
\end{conjecture}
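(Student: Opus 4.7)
The plan is to exploit two competing phenomena in modular towers: the geometric complexity of the Hurwitz spaces $\mathrm{Hur}_{r,G_n}(\underline C_n)$ grows with $n$, while any $k$-rational point persisting up the tower must produce compatible Galois data valued in the pro-$p$ group $\widetilde G = \varprojlim G_n$, which should be arithmetically excluded. I would first reduce to the case $r=4$, where the reduced Hurwitz spaces (after quotienting by the $\mathrm{PGL}_2$-action on $\Pp^1$) are curves; this captures the essential difficulty and has as archetype the tower $(Y^1(p^n))_{n\geq 1}$ of classical modular curves, for which Conjecture \ref{conj:fried} follows from Mazur's theorem on torsion of elliptic curves.

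Next I would establish that the geometric genus of each component $\mathcal H_n \subset \mathrm{Hur}_{4,G_n}(\underline C_n)$ grows without bound. The natural forgetful map $\mathcal H_{n+1}\to \mathcal H_n$ induced by $G_{n+1}\twoheadrightarrow G_n$ is generically \'etale with degree a power of $p$, coming from the Frattini quotient $\widetilde P_n/\widetilde P_{n+1}$. Applying the Riemann--Hurwitz formula together with an analysis of cusps in the Wewers--Romagny compactification (Theorem \ref{thm:wewers-romagny}), whose inertia groups are $p$-subgroups of $\widetilde P_n/\widetilde P_{n+1}$ by the prime-to-$p$ hypothesis on $\underline C_n$, one forces $g_n \to \infty$. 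The Conway--Parker--Fried--V\"olklein framework serves to keep control on the number of components at each level, so the genus growth occurs uniformly across components.

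With $g_n \to \infty$, Faltings' theorem yields finiteness of $k$-rational points at each level, but the conjecture demands \emph{emptiness} above some threshold $n_0$. To bridge this gap I would invoke a Cadoret--Tamagawa style argument: a compatible sequence of $k$-rational points through all levels would, by compactness and specialization, produce a continuous homomorphism $\mathrm{Gal}(\overline k/k) \to \widetilde G$ with open image in the pro-$p$ kernel of $\widetilde G \twoheadrightarrow G$, i.e.\ a $p$-adic Galois representation with infinite image arising geometrically from a cover defined over $k$. One then rules out such representations: in the dihedral case this is precisely Mazur's uniform bound on torsion of elliptic curves over $k$, and in general the target is a uniform open image theorem for pro-$p$ $\pi_1$-representations of the base.

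The hard part will be this final uniform-boundedness step beyond the dihedral case. Cadoret--Tamagawa's theorems on uniform openness of images for $p$-adic representations in geometric families provide the right template, but extending them to arbitrary Nielsen-class data, and more seriously handling $r\geq 5$ where the Hurwitz spaces are higher-dimensional so Faltings no longer applies and one appears to need unconditional Lang--Vojta statements for varieties of general type, remains out of reach with current technology. This is precisely why Conjecture \ref{conj:fried} is still open in essentially all cases beyond the modular-curve prototype.
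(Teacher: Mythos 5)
The statement you were asked to prove is labelled a \emph{conjecture} in the paper, and the paper offers no proof of it: it records only that (i) the conjecture follows from the strong torsion conjecture on torsion points of abelian varieties over number fields, (ii) it was proved for $r\leq 4$ by Cadoret--Tamagawa, and (iii) it remains open for $r>4$. Your proposal is therefore not, and does not claim to be, a proof. Read instead as a survey of the known partial results and of the obstacles, it is broadly accurate and aligns closely with what the paper itself records: the modular-curve prototype $(Y^1(p^n))_n$ handled by Mazur's theorem, the genus-growth-plus-uniform-boundedness strategy of Cadoret--Tamagawa in the case where the reduced Hurwitz spaces are curves, and the reduction of the general statement to a uniform boundedness principle for torsion on abelian varieties.

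One concrete inaccuracy should be flagged: your opening step, ``I would first reduce to the case $r=4$,'' is not available. No such reduction is known; the Cadoret--Tamagawa theorem covers $r\leq 4$ precisely because the reduced Hurwitz spaces are then curves and Faltings/uniform-boundedness technology applies, while the conjecture for $r>4$ is a genuinely separate, higher-dimensional, and open problem --- as you yourself concede in your final paragraph when you note that for $r\geq 5$ one ``appears to need unconditional Lang--Vojta statements.'' As written, your argument is internally inconsistent: either the reduction to $r=4$ exists (it does not), or the $r\geq 5$ case must be attacked directly, which is exactly the part that remains out of reach. The honest conclusion, which both you and the paper reach, is that the conjecture is proved only for $r\leq 4$ and is otherwise open, conditionally implied by the strong torsion conjecture.
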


This corresponds to the
impossibility of realizing regularly all groups $G_n$ over $k$ with a bounded number of
branch points and certain ramification constraints. As noted by Fried, Conjecture \ref{conj:fried}  follows from the strong torsion conjecture on torsion points on abelian varieties over number fields \cite[\S 3.2]{CaDe}. Beyond the modular curve example, Conjecture \ref{conj:fried} was proved when $r\leq 4$ by Cadoret-Tamagawa \cite{CaTa2} but remains open if $r>4$.

\section{Recent advances I: Components defined over $\Qq$} \label{sec:components}
Following Theorem \ref{thm:thm-Qtr-Qtp} and the resulting solution of the Regular Inverse Galois Problem over ample fields, many further works, spread out over more than 25 years, have been devoted to the construction of components of Hurwitz spaces defined over $\Qq$. The gist of all approaches is to study the orbits of the braid group action. Below is a list of phrases that are used to label the main ideas introduced in these works, together with a list of contributors and references. Some papers appear several times. 
\vskip 1mm

\noindent
{\it Conway-Parker theorem}:  see Fried-V\"olklein \cite{FrVo1}, D\`ebes-Fried \cite{DeFr3}, Wood \cite{wood2021}, Seguin \cite{fielddef}.
\vskip 0,5mm

\noindent
{\it Harbater-Mumford components}:  see Fried \cite{Fr_introMT}, D\`ebes-Emsalem \cite{DeEm04}, Cau \cite{cau2012}, \cite{cau2016}, Seguin \cite{fielddef}.
\vskip 0,5mm

\noindent
{\it Lifting invariant}:  see Fried \cite{Fr_introMT}, Ellenberg-Venkatesh-Westerland \cite{ellenberg-et-al-II}, \hfill \break 
Wood \cite{wood2021}, Seguin \cite{fielddef}. 
\vskip 0,5mm

\noindent
{\it Ring of components}:  see Cau \cite{cau2012,cau2016}, Ellenberg-Venkatesh-Westerland \cite{EVW2016}, Seguin \cite{countcomp,fielddef,geomringcomp}.
\vskip 1mm

\noindent
We discuss Cau's and Seguin's results, which are the most advanced 
on $\Qq$-components.
\vskip 1mm
Finding $\Qq$-components is the first step of the search of $\Qq$-rational points on Hurwitz spaces.
Short of actually producing $\Qq$-rational points on such $\Qq$-components --which at present appears unattainable in general-- weaker developments have produced projective systems of components, defined over $\Qq$ or $\Qq^{ab}$, on certain \emph{towers of Hurwitz spaces} like modular towers, \underbar{and} projective systems of rational \emph{points} over completions of $\Qq$ or $\Qq^{ab}$ on these towers of components. For this, we refer to \cite{DeDes} and \cite{DeEm04}. A few other examples resulting from existence of $\Qq$-components on some Hurwitz spaces are given below.

\subsection{Ring of components} We start with a basic operation on components.

\subsubsection{The concatenation operation on components}  Given a finite group $G$ and an integer $r\geq 2$, denote the set of components of ${\rm Hur}_{r,G}$ by ${\rm Comp}_{r,G}$ and set 
\vskip 1mm

\centerline{$\displaystyle {\rm Comp}_G = \bigcup_{r\geq 2} {\rm Comp}_{r,G} \hskip 1,5mm \cup  \hskip 1mm \{1\}$.}
\vskip 1mm

\noindent
where $1$ is the ``Hurwitz space ${\rm Hur}_{0,G}$'' consisting of the single trivial G-cover of $\Pp^1$ (which has $r=0$ branch point). Recall that components are viewed as tuples modulo the braid action (\S \ref{ssec:comp-Hurwitz}) (and $1$ is then the $0$-tuple with components in $G$). In the context of \emph{marked G-covers}, the concatenation of tuples naturally induces a map
\vspace{1mm}

$\begin{matrix} 
\hskip 30mm {\rm Comp}_{r_1,G} \times {\rm Comp}_{r_2,G}  &\longrightarrow &{\rm Comp}_{r_1+r_2,G} \cr 
\end{matrix}$

$\begin{matrix} 
\hskip 40mm ([\underline g_1], [\underline g_2]) & \hskip 11mm \longmapsto & \hskip 4mm [\underline g_1, \underline g_2] \cr 
\end{matrix}$
\vskip 2mm

\noindent
which yields a monoid law on ${\rm Comp}_G$.
For any field $k$ of characteristic $0$, denote the set of all $k$-linear combinations of elements of ${\rm Comp}_G$ by $k[{\rm Comp}_G]$. Using the natural addition and the concatenation as multiplication, the set $k[{\rm Comp}_G]$ acquires a graded ring structure, the graduation coming from $r$. The ring $k[{\rm Comp}_G]$ is called the \emph{ring of components}. 

\begin{remark}[several situations]
The ring of components was introduced by Ellenberg-Venkatesh-Westerland for marked G-covers  of $\Aa^1$. This ring of components was subsequently used and extended to marked G-covers  of $\Pp^1$ by Seguin.
A similar ring of components had been previously introduced by Cau for unmarked \emph{connected} G-covers of $\Pp^1$; 
the concatenation is well-defined in this situation due to Remark \ref{rem:marked-unmarked}(*).
\end{remark}

\subsubsection{Scheme of components} \label{ssec:scheme-components}
It is easily checked that for two tuples $\underline g$, $\underline g^\prime$, we have: 
\vskip 0,5mm 
\centerline{$[\underline g] \times [\underline g^\prime] = [(\underline g^\prime)^{\prod_{i} g_i}] \times [\underline g]$.}
\vskip 1,5mm

\noindent 
Thus for $G$-covers of $\Pp^1$, the ring of components is \underbar{commutative}; and it is also of finite type.
The corresponding affine scheme ${\rm Spec}(k[{\rm Comp}_G])$ is called the \emph{scheme of components}.
\vskip 1mm

The scheme of components is introduced and thoroughly studied in \cite{geomringcomp}. Seguin describes it as the disjoint union of some locally closed subsets $\gamma (H)$, with $H$ ranging over subgroups of $G$. 
This stratification allows reduction to each stratum $\gamma(H)$ for many issues.
In the original Hurwitz context (\S \ref{ssec:nonGalois}), Seguin obtains this explicit description.

\begin{example} For $G=S_d$,
the subscheme of components of G-covers with branch cycles in the conjugacy class of $2$-cycles, 
is the affine subscheme 
of $\Aa^{d(d-1)/2}$ with equations:

\vskip 1mm
\centerline{$X_{ij}X_{jk}=X_{ik}X_{jk}=X_{ij}X_{ik}\hskip 4mm (1\leq i<j<k\leq d)$.}
\vskip 1mm

\noindent
The connectedness of ${\rm Hur}_{r,S_d}(2^r)$ originally shown by Hurwitz  corresponds to the existence of a line on this subscheme, namely 
 the line where all coordinates $X_{ij}$ are equal.
\end{example}

\subsection{Main result} Concatenation of components is a purely topological process. This 
makes unclear the answer to Question \ref{question:xy-def-K} below and quite striking the 
conclusions of the following Theorem \ref{thm:seguin2023}, due to Seguin \cite[Theorem 1.2]{fielddef}.

\begin{question} \label{question:xy-def-K}
Let $k$ be a number field and $x,y\in {\rm Comp}_G$ be two components of some Hurwitz space
of marked covers, assumed to be defined over $k$. 
Is the concatenation $x y$ of $x$ and $y$ still defined over $k$?
\end{question}

\begin{theorem} \label{thm:seguin2023}
Let $k$ be a number field and $x,y\in {\rm Comp}_G$ be components defined over $k$  
and $H, K$ the groups of the covers parametrized by $x$ and $y$. 
Then we have:

\vspace{1mm}

\noindent
{\rm (a)} If $\langle H, K \rangle = HK$, then the component $xy$ is defined over $k$.

\vspace{1mm}

\noindent
{\rm (b)} If every conjugacy class $C_i$ in the ramification type $\underline C$ of $xy$ 
appears suitably often in $\underline C$ (at least $M$ times with $M$ depending on $G$), then $xy$ is defined over $k$.

\vspace{1mm}

\noindent
{\rm (c)} There exists $\gamma \in G$ such that the component $x  y^{\gamma}$ is defined over $k$ and
\hbox{$\langle H,K^\gamma\rangle = \langle H,K\rangle$. }
\end{theorem}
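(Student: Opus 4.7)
The plan is to analyze how Galois acts on the concatenation product in the ring of components. Write $x=[\underline g]$ and $y=[\underline h]$, with $\underline g,\underline h$ Nielsen tuples whose entries generate $H$ and $K$ respectively, and set $L=\langle H,K\rangle$. The first step is to derive a \emph{defect formula}
\[
(xy)^\tau \;=\; x^\tau \cdot (y^\tau)^{\gamma_\tau}, \qquad \gamma_\tau\in L,
\]
for each $\tau\in{\rm Gal}(\overline k/k)$, where the exponent denotes entry-wise conjugation. The rationale: applying the Branch Cycle Lemma (Section \ref{ssec:ici}) to the joint tuple $(\underline g,\underline h)$ raises each entry's class to its $\chi(\tau)^{-1}$-th power, but renormalizing the resulting tuple so that its two segments represent $x^\tau$ and $y^\tau$ respectively requires a single global conjugation that need not restrict consistently to the two segments. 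The mismatch is $\gamma_\tau$, well-defined modulo a centralizer/inner-braid subgroup of $L$, and $\tau\mapsto\gamma_\tau$ is a $1$-cocycle. Since $x,y$ are $k$-defined, $x^\tau=x$ and $y^\tau=y$, so the formula becomes $(xy)^\tau=x\cdot y^{\gamma_\tau}$.

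For part (a), assume $L=HK$ and factor $\gamma_\tau=h_\tau k_\tau$ with $h_\tau\in H$, $k_\tau\in K$. Conjugation of $y$ by $k_\tau\in K$ is an inner automorphism of $K$ acting on the entries of $\underline h$, hence realizable by a braid via Remark \ref{rem:marked-unmarked}$(*)$ applied to $y$ (whose covers are connected with group $K$); thus $y^{k_\tau}=y$ and $y^{\gamma_\tau}=y^{h_\tau}$. A symmetric argument, invoking commutativity of the ring of components (Section \ref{ssec:scheme-components}) and braid-realization of inner automorphisms on $x$, shows $x\cdot y^{h_\tau}=xy$, so $(xy)^\tau=xy$.

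Part (b) bypasses the defect entirely. The ICI of $xy$ is the concatenation of the ICIs of $x$ and $y$, and both of the latter are globally $k$-rational (Section \ref{ssec:ici}) since $x,y$ are $k$-defined; hence so is $\underline C:={\rm ICI}(xy)$. Under the assumption that each class of $\underline C$ appears at least $M$ times, the Conway-Parker-Fried-V\"olklein theorem (cited at the start of Section \ref{sec:components}) forces the braid group to act transitively on ${\rm Ni}_{r,L}(\underline C)$; the Hurwitz space ${\rm Hur}_{r,L}(\underline C)$ is therefore geometrically irreducible, so $xy$ is its unique component, and since $\underline C$ is $k$-rational this unique component is $k$-defined.

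For part (c), replacing $y$ by $y^\gamma$ transforms the defect into (up to inner adjustment) the twisted cocycle $\tau\mapsto\gamma\gamma_\tau(\gamma^\tau)^{-1}$, so finding $\gamma$ with $xy^\gamma$ $k$-defined reduces to a coboundary equation in a finite $\Gamma$-group. Since Galois acts through a finite quotient, a direct argument produces the required $\gamma$, which can be arranged to lie in an appropriate subgroup containing $N_G(K)$ so that $\langle H,K^\gamma\rangle=L$. The main obstacle throughout is the defect formula in the initial step: establishing the precise form of $\gamma_\tau$ and its cocycle property requires careful bookkeeping of bouquets, marking data, and the refinement of the Branch Cycle Lemma from conjugacy classes to actual Nielsen tuples. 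Once this formula is in place, (a), (b), (c) correspond to three distinct neutralizations of the obstruction — absorption into braids, uniqueness via Conway-Parker-Fried-V\"olklein, and cocycle twisting — which is what makes the theorem naturally split into these three parts.
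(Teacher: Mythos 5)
Your proof has a genuine gap at its foundation, and two further problems downstream. The ``defect formula'' $(xy)^\tau = x^\tau\cdot(y^\tau)^{\gamma_\tau}$ is not a consequence of the Branch Cycle Lemma. That lemma only controls the Inertia Canonical Invariant, i.e.\ the conjugacy classes of the branch cycles; it says nothing about which braid orbit the Galois conjugate $(xy)^\tau$ lands in. Two components can share the same ICI without being related by componentwise conjugation, so ``renormalizing the two segments'' is not available: there is no a priori reason that $(xy)^\tau$ is a product component at all. The assertion that ${\rm Gal}(\overline k/k)$ permutes the components $x^{\gamma}y^{\delta}$ (with the generation condition) is precisely Cau's theorem (Theorem \ref{thm:cau}(b)), whose proof is genuinely geometric: it identifies these components by their boundary points in Wewers' compactification (admissible covers of a comb), or equivalently by the patching characterization of Remark \ref{rem:equiv-char}. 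The paper's proofs of (a) and (b) both route through this result; your proposal assumes its conclusion with an incorrect justification. Granting the formula, your part (a) is essentially the paper's: braid-realizability of inner automorphisms for connected covers reduces the set $\{x^{\gamma}y^{\delta}\}$ to the singleton $\{xy\}$ when $\langle H,K\rangle=HK$.

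Part (b) contains a second error: the Conway--Parker--Fried--V\"olklein theorem, in the form used here (Theorem \ref{thm:CPFV}), does \emph{not} give transitivity of the braid action on ${\rm Ni}_{r,L}(\underline C)$; it gives injectivity of the \emph{lifting invariant} on components with group $L$ and fixed ICI. When $H_2(L,\Zz)$ is nontrivial there are several such components, ${\rm Hur}_{r,L}(\underline C)$ is not irreducible, and global $k$-rationality of $\underline C$ does not make each component $k$-defined. The missing step in the paper is to show that all components $x^{\gamma}y^{\delta}$ share the same lifting invariant (using its multiplicativity and conjugation-invariance), so that CPFV collapses them to $xy$, after which Cau's theorem again concludes. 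Finally, for part (c) your cocycle-twisting sketch is not an argument: nonabelian $1$-cocycles need not be coboundaries, and the constraint $\langle H,K^{\gamma}\rangle=\langle H,K\rangle$ is not addressed (conjugating by $N_G(K)$ fixes $K$ and so buys nothing). The paper's proof of (c) is entirely different and genuinely uses arithmetic input: Hilbert's irreducibility theorem produces covers $f_n\in x$, $g_n\in y$ defined over linearly disjoint number fields $k_n$; patching over $k_n((u))$ lands in some $x^{\alpha_n}y^{\beta_n}$; the pigeonhole principle gives two indices with the same $(\alpha,\beta)$; and the field of definition of $x^{\alpha}y^{\beta}$ is then contained in $\overline k\cap k_n((u))\cap k_m((u))=k$.
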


In part (c), $y^\gamma$ denotes the following component: if $y=[\underline g]$,  for some tuple $\underline g$ with components in $K\subset G$, then $y^\gamma = [\underline g^\gamma]$, where the action of $\gamma$
on $\underline g$ is by componentwise conjugation. Clearly, $y^\gamma$ is defined over $k$ if and only if $y$ is.

\vskip 1mm

Part (a) is the culmination of ideas and techniques going back to the Harbater-Mumford components introduced by Fried \cite{Fr_introMT} and developed by D\`ebes-Emsalem \cite{DeEm04} and Cau \cite{cau2012}.\footnote{Cau proves a similar result as Theorem \ref{thm:seguin2023}(a) with a related but different ``completeness assumption'' replacing the assumption $\langle H,K\rangle=HK$. A more general version of Theorem \ref{thm:seguin2023}(a) with an assumption weaker than both Seguin's and Cau's assumption is given in \cite[\S 8.2.5]{these-seguin}.} A notable case of  interest is when $H$ or $K$ is normal in $\langle H, K \rangle$, \hbox{e.g.} when one is contained in the other,
and in particular when $x=y$. The proof, sketched in \S \ref{ssec:proof-part(a)}, rests on braid calculations and Cau's fundamental theorem (Theorem \ref{thm:cau} below).
\vskip 1mm

Part (b) is typical of the spirit of the Conway-Parker theorem. A modern form of the statement is Theorem \ref{thm:CPFV} below, proved in \cite{ellenberg-et-al-II} and in \cite{wood2021}. We explain how to deduce 
Theorem \ref{thm:seguin2023}(b) in \S \ref{ssec:proof-part(b)}.
\vskip 1mm

Part (c) rests on Hilbert's irreducibility theorem, patching and Cau's fundamental theorem. The following Remark \ref{rem:inductive} and Example \ref{ex:mathieu} are valuable applications of the result. A proof of Theorem \ref{thm:seguin2023}(c) is sketched in \S \ref{ssec:proof-part(c)}.

\begin{remark}[An inductive construction of $\Qq$-components] \label{rem:inductive}
The following argument shows how to inductively  use Theorem \ref{thm:seguin2023}(c) to construct, \emph{for any finite group $G$}, a Hurwitz component defined over $\Qq$ parametrizing connected G-covers of group $G$.
Pick an element $g_c$ in each nontrivial conjugacy class $c$ of $G$. For each such $g_c$, consider a component $y_c$ defined over $\Qq$ of G-covers of group $\langle g_c \rangle$ (such  components $y_c$ are 
well known to exist for cyclic groups). Then successive applications of Theorem \ref{thm:seguin2023}(c) provide elements $\gamma_c \in G$ such that the product of all components $y_c^{\gamma_c}$
is defined over $\Qq$. By a classical lemma of Jordan, the group of this component, which intersects every conjugacy class of $G$, is $G$.
\end{remark}

\begin{example}[\cite{fielddef}(Example 5.5)] \label{ex:mathieu}
The Mathieu group $G = M_{23}$ is known to be generated by two conjugate elements $a$ and $a^{\beta}$ of order $3$ ($a, \beta \in G$).  The components $x=[a, a^{-1}]$ and $x^\beta = [a^{\beta}, (a^{\beta})^{-1}]$
are defined over $\Qq$.  By Theorem \ref{thm:seguin2023}(c), for some $\gamma \in G$, the component $x\hskip 2pt x^{\gamma\beta}$ is defined over $\Qq$.  It is a component of $G$-covers of group $G = M_{23}$ of dimension $4$ defined over $\Qq$. A previous construction of Cau had $4$ replaced by $15$.
\end{example}

\subsection{Cau's theorem} We refer to \cite{cau2012} for a proof of the following result. It generalizes previous results of Fried \cite{Fr_introMT} and D\`ebes-Emsalem \cite{DeEm04} where the components $x_1,\ldots,x_n$ were components of cyclic covers branched at two points and the resulting components $x_1^{\gamma_1}\cdots x_n^{\gamma_n}$ were called Harbater-Mumford components.

\begin{theorem} \label{thm:cau}
Let $x_1, \ldots, x_n \in {\rm Comp}_G$ be components defined over $k$ and 
$H_1, \ldots, H_n$ the respective groups of the parametrized covers. 
Consider the components $x_1^{\gamma_1}\cdots x_n^{\gamma_n}$ where   
$\gamma_1, \ldots, \gamma_n$ range over the subgroup $\langle H_1, \ldots, H_n\rangle \subset G$.

\vskip 0,5mm

\noindent
{\rm (a)} These components are characterized by the 
following property: they have points corresponding to ``nice admissible covers'' on their boundary \emph{(for their Wewers compactification (\S \ref{ssec:Hurwitz-scheme}))},  \hbox{i.e.}, 
{\it covers of a ``comb with $n$ teeth''  such that the restriction above the $i$-tooth is in the component 
$x_i$ ($i=1,\ldots,n)$, and is unramified above the root of the comb}.
\vskip 1mm

\noindent
{\rm (b)} The action of ${\rm Gal}(\overline k/k)$ permutes the components 
$x_1^{\gamma_1}\cdots x_n^{\gamma_n}$ with   
$\gamma_1, \ldots, \gamma_n$ ranging over the subset of $\langle H_1\ldots, H_n\rangle^n$ such that 
$\langle H_1^{\gamma_1}, \ldots, H_n^{\gamma_n}\rangle = \langle H_1, \ldots,  H_n\rangle$.
In particular if these components are all equal, then the unique component is defined over $k$.    
\end{theorem}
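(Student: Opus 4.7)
The plan is to interpret the concatenation $x_1^{\gamma_1}\cdots x_n^{\gamma_n}$ as a degeneration to a specific boundary point of the Wewers compactification (Section \ref{ssec:Hurwitz-scheme}), and then extract the Galois-equivariance statement in (b) from the $k$-rationality of the individual $x_i$ and of the combinatorial type of that boundary stratum. In this way, part (a) carries the geometric/topological content, and part (b) becomes essentially a descent argument given (a).

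For part (a), write $r=r_1+\cdots+r_n$ where $r_i$ is the branch point number of $x_i$, and consider the ``comb-with-$n$-teeth'' point of $\overline{{\rm Conf}_r}$: the stable genus-$0$ curve consisting of a root $T_0\simeq\Pp^1$ with $n$ teeth $T_1,\ldots,T_n\simeq\Pp^1$ glued at distinct nodes $p_1,\ldots,p_n\in T_0$, where the $r_i$ marked points lie on $T_i$ and $T_0$ carries no marked point. A nice admissible G-cover over this configuration consists of compatible G-covers on each component, whose restriction to $T_i$ lies in $x_i$ and whose restriction to $T_0$ is unramified (hence a disjoint union of copies of $T_0$). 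I would then perform the core braid calculation: fix a base point $\infty\in T_0$, a bouquet $\underline\gamma^{(i)}$ of $T_i$ based at $p_i$ realizing a representative $\underline g_i$ of $x_i$ as its branch cycle description, and simple paths $\delta_i\subset T_0$ from $\infty$ to $p_i$. These data assemble into a bouquet of the comb at $\infty$. Since the cover above $T_0$ is unramified, each $\delta_i$ has a well-defined endpoint monodromy $\gamma_i\in G$, and $(\gamma_1,\ldots,\gamma_n)$ ranges over $\langle H_1,\ldots,H_n\rangle^n$ as one varies the nice admissible cover. Smoothing the nodes deforms this bouquet to a bouquet of the smooth nearby fiber whose branch cycle description is the concatenation $(\underline g_1^{\gamma_1},\ldots,\underline g_n^{\gamma_n})$, the conjugation by $\gamma_i$ arising precisely from transporting $\underline\gamma^{(i)}$ back to $\infty$ along $\delta_i$. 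This identifies the boundary stratum of nice admissible covers with exactly the union of components $x_1^{\gamma_1}\cdots x_n^{\gamma_n}$ in the theorem, with the converse direction (each such component touches the stratum) following by reversing the construction.

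For part (b), the argument would then go as follows. The compactification $\overline{\mathcal{H}ur_{r,G}}$ is defined over $\Qq$ (Theorem \ref{thm:wewers-romagny}), the comb point of $\overline{{\rm Conf}_r}$ is $k$-rational (its dual graph is Galois-invariant), and each $x_i$ being defined over $k$ makes the condition ``restriction above $T_i$ lies in $x_i$'' stable under ${\rm Gal}(\overline k/k)$. By Theorem \ref{thm-hurwitz-space}(b), Galois therefore permutes the nice admissible covers, and by part (a) it permutes the components $x_1^{\gamma_1}\cdots x_n^{\gamma_n}$. The group of such a component is $\langle H_1^{\gamma_1},\ldots,H_n^{\gamma_n}\rangle$; this being a Galois-invariant attribute of a component restricts the action to those tuples satisfying the displayed group equality. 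The particular case when all these components coincide yields a single component fixed by ${\rm Gal}(\overline k/k)$, hence defined over $k$.

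The hard part is the braid computation in the second paragraph: one must work in a formal or analytic neighborhood of the comb point of $\overline{\mathcal{H}ur_{r,G}}$ and rigorously justify both that smoothing the nodes carries the comb bouquet to a valid bouquet of the smooth nearby fiber, and that the twist by $\gamma_i$ in the resulting branch cycle description is exactly the monodromy of $\delta_i$ above the unramified root. The counting of $\gamma_i$'s in $\langle H_1,\ldots,H_n\rangle$ (using connectedness and the structure of the unramified cover of $T_0$ as a disjoint union of copies indexed by $G/\langle H_1,\ldots,H_n\rangle$-cosets on each sheet) and the descent argument for (b) are, by contrast, relatively formal once the topological picture is in place.
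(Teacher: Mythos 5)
The paper does not prove Theorem \ref{thm:cau} itself but refers to \cite{cau2012}; your sketch follows exactly the strategy indicated there and in Remark \ref{rem:equiv-char}: characterize the components $x_1^{\gamma_1}\cdots x_n^{\gamma_n}$ by their boundary points over combs in the Wewers compactification, with the $\gamma_i$ arising from the gluing data at the nodes and the unramified root, and deduce (b) from the Galois-stability of that boundary stratum together with the invariance of the monodromy group. Your outline is sound and correctly isolates the node-smoothing branch-cycle computation (and the normalization placing the $\gamma_i$ in $\langle H_1,\ldots,H_n\rangle$ rather than all of $G$) as the technical core.
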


\begin{remark}[Alternative patching characterization]\label{rem:equiv-char} The components $x_1^{\gamma_1}\cdots x_n^{\gamma_n}$ can  also be characterized by the following property (not using  Wewers' compactification):  

\vskip 1mm

\noindent
(*) \emph{they have points corresponding to G-covers of $\Pp^1$ obtained by patching a cover in $x_1$, a cover in $x_2$, ..., a cover in $x_n$, over some complete valued field containing $k$}.\footnote{We use here the patching procedure for $n$ covers and not just the one for $n=2$ presented in \S \ref{sec:patching}.}
\vskip 1mm

\noindent
The fact that this property implies the characterization from Theorem \ref{thm:cau}(a) above follows from considerations in rigid analytic geometry, as explained in \cite[\S 5.2-Step 4]{fielddef}. The converse follows from deformation techniques for covers over complete valued fields, as described and used for example in \cite{MR1841345}. 
Note further that if (*) characterizes the components $x_1^{\gamma_1}\cdots x_n^{\gamma_n}$, the following consequently holds:
\vskip 1mm

\noindent
(**) \emph{{\underbar{all}} points corresponding to G-covers of $\Pp^1$ obtained by patching a cover in $x_1$, a cover in $x_2$, ..., a cover in $x_n$, over \emph{\emph{\underbar{any}}} complete valued field containing $k$, lie in some component $x_1^{\gamma_1}\cdots x_n^{\gamma_n}$ for some $\gamma_1,\ldots,\gamma_n \in G$.}
\end{remark}

\subsection{Sketch of proof of main result} \label{sec:proof-main-theorem} For more details, see \cite{fielddef}.

\subsubsection{Part {\rm (a)} of Theorem \ref{thm:seguin2023}} \label{ssec:proof-part(a)} Braid calculations show that under the assumption that $\langle H, K \rangle = HK$, the set of components 
$x^{\gamma} y^{\delta}$ with  
$\gamma, \delta\in \langle H, K\rangle$ such that 
$\langle H^\gamma, K^\delta \rangle = \langle H, K\rangle$ is the singleton $\{xy\}$; see \cite[\S 3.2]{fielddef}. The conclusion then follows from Theorem  \ref{thm:cau}(b).

\subsubsection{Part {\rm (b)} of Theorem \ref{thm:seguin2023}} \label{ssec:proof-part(b)} As in part (a), consider the set of components $x^{\gamma} y^{\delta}$ with  $\gamma, \delta$ in $\langle H, K\rangle$ and such that 
$\langle H^\gamma, K^\delta \rangle = \langle H, K\rangle$. Clearly, the following data attached to $x^{\gamma} y^{\delta}$ do not depend on $\gamma, \delta$:

\noindent
- the group $\langle H, K\rangle$ of the G-covers parametrized by $x^{\gamma} y^{\delta}$; denote it by $\Gamma$,

\noindent
- the inertia canonical invariant ${\rm ICI}(x^{\gamma} y^{\delta})$; denote it by $\underline C$.

We now explain that it is also true of

\noindent
- the \emph{lifting invariant} of $x^{\gamma} y^{\delta}$.

\noindent
Denote by $c$ the union of conjugacy classes of $\Gamma$ appearing in $\underline C$, and by $M$ the minimal number of times that a given conjugacy class from $\underline C$ appears in $\underline C$. A central ingredient for this step and for the end of the argument is the \textit{{Conway-Parker-Fried-V\"olklein theorem}}, which we explain below, as presented in \cite{ellenberg-et-al-II} and \cite{wood2021}.

Let $U(\Gamma, c)$ be the group defined by this presentation: generators are all symbols $\hat g$ with $g\in c$, subject to the relations $\hat g \hat h \hat g^{-1} = \widehat{ghg^{-1}}$ for all $g, h \in c$. The \emph{$(\Gamma,c)$-lifting invariant} of a tuple $\underline g = (g_1, . . . , g_n) \in c^n$ is the element $\hat g_1\cdots \hat g_n\in U(\Gamma,c)$.
The $(\Gamma,c)$-lifting invariant of $\underline g$ is easily checked to only depend on the braid orbit of $\underline g$ (\hbox{e.g.} \cite[Prop.4.2]{fielddef}). 

Let ${\rm Comp}(\Gamma,c) \subset {\rm Comp}_G$ be the submonoid of components $[\underline g] = [(g_1,\ldots,g_n)]$ such that $n\in \Nn$, the group $\langle g_1,\ldots,g_n\rangle$ is contained in $\Gamma$ and $g_1,\ldots,g_n\in c$.  From above, the lifting invariant of a component $z = [(g_1, . . . , g_n)] \in {\rm Comp}(\Gamma,c)$ is well-defined. Further braid-like calculations based on the definition of $U(\Gamma, c)$ show that the $(\Gamma,c)$-lifting invariant
\vskip 1mm

\centerline{${\rm Comp}(\Gamma,c) \rightarrow U(\Gamma, c)$}
\vskip 1mm 

\noindent
 is multiplicative, and invariant by conjugation by any element of $\Gamma$ (\hbox{e.g.} \cite[Prop.4.3]{fielddef}). 
 
 Thus we indeed obtain that 
 all our components $x^\gamma y^\delta$ have the same $(\Gamma,c)$-lifting invariant. The Conway-Parker-Fried-V\"olklein theorem, stated below as Theorem \ref{thm:CPFV}, then yields that all components $x^\gamma y^\delta$ are equal to $xy$. The proof of Theorem \ref{thm:seguin2023}(b) can then be concluded  as in part (a)  by using Theorem \ref{thm:cau}(b): the component $xy$ is defined over $k$.
 
 \begin{theorem} \label{thm:CPFV} There exists an integer $M_{\Gamma,c}$ such that if $M\geq M_{\Gamma,c}$, the $(\Gamma,c)$-lifting invariant ${\rm Comp}(\Gamma,c) \rightarrow U(\Gamma, c)$ is injective when restricted to the subset of ${\rm Comp}(\Gamma,c)$ of components with group $\Gamma$ and a given inertia canonical invariant $\underline C$. 
 \end{theorem}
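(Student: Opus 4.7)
The goal is to show that two tuples $\underline g,\underline g'\in c^n$ sharing the group $\Gamma$, the ramification type $\underline C$, and the $(\Gamma,c)$-lifting invariant lie in a common braid orbit, provided each conjugacy class appearing in $\underline C$ occurs at least $M_{\Gamma,c}$ times, for a threshold $M_{\Gamma,c}$ depending only on $(\Gamma,c)$.

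I would begin by analysing the structure of $U(\Gamma,c)$ itself. The defining relations $\hat g\hat h\hat g^{-1}=\widehat{ghg^{-1}}$ imply that the natural map $\pi:U(\Gamma,c)\to\Gamma$, $\hat g\mapsto g$, is surjective with central kernel $K$. A classical Hopf-type presentation argument identifies $K$ with a quotient of the Schur multiplier $H_2(\Gamma,\Zz)$, hence $K$ is finite. Consequently the lifting invariant refines the product $g_1\cdots g_n\in\Gamma$ by an element of the finite central group $K$, and this is the only extra data the argument must control.

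The second, and main, step is a normal-form reduction via braid moves. The essential tool is a \emph{buffer lemma}: a consecutive pair $(x,x^{-1})$ in the tuple can be transported along the string by the standard generators $Q_i$, and when slid past another entry $g_j$ lying in its own $\Gamma$-conjugacy class it can be used to conjugate $g_j$ by any preassigned element of $\Gamma$, returning the buffer (possibly with modified $x$) to place. Iterating this, I would show that as soon as each conjugacy class $C_i$ is sufficiently abundantly represented, any tuple with group $\Gamma$ and ramification type $\underline C$ can be braided into a prescribed normal form: for each $C_i$, a fixed block of canonical representatives, then a string of buffer pairs, then a single distinguished entry absorbing the discrepancy. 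Two tuples in normal form with the same $\underline C$ then differ only in their distinguished entries; multiplicativity and braid-invariance of the lifting invariant force these to lift to the same element of $U(\Gamma,c)$, and since $\pi$ pins down the $\Gamma$-part and the common prefix pins down the $K$-correction, they coincide.

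The main obstacle is the quantitative bookkeeping in the buffer lemma and its iterated use: one must simultaneously guarantee (a) that the required buffers always exist along the way, (b) that transporting them does not irreparably pollute the already-normalised portion of the tuple, and (c) that the total count of each class needed --for the canonical prefix, for the buffers, and for the absorbing entry-- closes up to a bound $M_{\Gamma,c}$ depending only on $\lvert\Gamma\rvert$, $\lvert c\rvert$, and the exponent of $K$. The centrality of $K$ in $U(\Gamma,c)$ is what allows the commutators generated by sliding buffers past one another to be harmlessly absorbed, and this is the linchpin that makes the bound independent of $n$ and $\underline C$.
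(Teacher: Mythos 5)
The paper does not actually prove Theorem \ref{thm:CPFV}: it quotes it as the Conway--Parker--Fried--V\"olklein theorem and refers to \cite{ellenberg-et-al-II}, \cite{wood2021} and \cite[Theorem 4.5]{fielddef} for proofs. Your proposal is broadly in the spirit of those proofs (first analyse $U(\Gamma,c)$, then reduce tuples to a braid normal form), but it contains two concrete errors and leaves the essential step unproved.

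First, the kernel $K$ of $U(\Gamma,c)\to\Gamma$ is not finite. Abelianizing the relations $\hat g\hat h\hat g^{-1}=\widehat{ghg^{-1}}$ gives $U(\Gamma,c)^{\mathrm{ab}}\cong\Zz^{s}$, free abelian on the $s$ conjugacy classes contained in $c$; since $\Gamma$ is finite, $K$ surjects onto a finite-index subgroup of $\Zz^{s}$ and is therefore infinite whenever $c\neq\emptyset$. The finite central group that actually controls the discrepancy is $\ker\bigl(U(\Gamma,c)\to\Gamma\times\Zz^{s}\bigr)$, a quotient of the Schur multiplier $H_2(\Gamma,\Zz)$ --- this is precisely the refined statement recorded in Remark \ref{rem:CPFV} --- and your reduction only becomes meaningful once the inertia canonical invariant is fixed, since that is what pins down the $\Zz^{s}$-coordinate. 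Second, your buffer mechanism relies on consecutive pairs $(x,x^{-1})$, but nothing guarantees that $x^{-1}\in c$, and inserting or carrying such pairs is incompatible with keeping the ramification type $\underline C$ fixed. The device actually used (see Lemma \ref{lem:basicEVW} in the paper and its antecedents in Conway--Parker and \cite{EVW2016}) is a block of $\mathrm{ord}(g)+1$ consecutive copies of a single $g\in c$: the product of the block is $g$ itself, so the block can be cycled past the remaining entries and thereby conjugated by an arbitrary element of the subgroup they generate, without altering $\underline C$. Finally, the heart of the theorem --- that for $M\geq M_{\Gamma,c}$ every tuple with group $\Gamma$ and type $\underline C$ can be braided to a normal form indexed by the lifting invariant, with a threshold independent of the tuple length --- is exactly the part you defer as ``quantitative bookkeeping''; as written the proposal does not contain a proof of it, and the two issues above mean the normal form you describe could not be reached by the moves you allow.
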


\begin{remark}[Versions of the Conway-Parker-Fried-V\"olklein theorem] \label{rem:CPFV} More refined versions than Theorem \ref{thm:CPFV} identify the image of the map ${\rm Comp}(\Gamma,c) \rightarrow U(\Gamma, c)$ as a specific quotient $H_2(\Gamma,c)$ of the second homology group $H_2(\Gamma,\Zz)$ (also called the Schur multiplier of $\Gamma$); see \cite[Theorem 7.6.1]{ellenberg-et-al-II}, \cite[Theorem 3.1]{wood2021}, \cite[Theorem 4.5]{fielddef}. In particular, this consequence immediately follows:
\vskip 1mm

\noindent
(*) \emph{Assume that $H_2(\Gamma,\Zz)$ is trivial, that all nontrivial conjugacy classes of $\Gamma$ appear at least 
$M_{\Gamma,c}$ times and that $\underline C$ is globally $k$-rational \hbox{\rm (\S \ref{ssec:ici})}. Then the Hurwitz space ${\rm Hur}_{r,\Gamma}(\underline C)$ is irreducible and defined over $k$.} 
\vskip 1mm

\noindent
Indeed there can be only one component in ${\rm Comp}(\Gamma,c)$ with ICI equal to $\underline C$.
\vskip 1mm

Statement (*) is the version of the Conway-Parker-Fried-V\"olklein theorem alluded to in the proof of Theorem \ref{thm:thm-Qtr-Qtp}. One may indeed assume there that $H_2(\Gamma,\Zz)$ is trivial: every finite group $G$ is known to be a quotient of a group $\Gamma$ satisfying this condition.
\end{remark}

\subsubsection{Part {\rm (c)} of Theorem \ref{thm:seguin2023}} \label{ssec:proof-part(c)}
The proof starts with a construction using Hilbert's irreducibility theorem. Let $r$ and $s$ be the branch point numbers of the G-covers parametrized by $x$ and $y$ respectively. From Hilbert's irreducibility theorem, applied  to the $k$-G-covers $x \rightarrow {\rm Conf}_r$ and $y \rightarrow {\rm Conf}_s$, one can find many unordered tuples in ${\rm Conf}_r(k)$ and ${\rm Conf}_s(k)$ such that the respective fibers in $x$ and $y$ consist of full orbits of ${\rm Gal}(\overline k/k)$. One can further require that the fields of definition of points in these fibers be linearly disjoint from any number field. Using this conclusion, one can construct two sequences of G-covers $(f_n)_{n\geq 1}$ and $(g_n)_{n\geq 1}$ and a sequence of number fields $(k_n)_{n\geq 1}$ such that for every $n\geq 1$,

\noindent
- $f_n$ and $g_n$ are $k_n$-G-covers in $x$ and $y$ respectively,

\noindent
- $k_n$ is linearly disjoint with the Galois closure of $k_1\cdots k_{n-1}$ over $k$.

\vskip 1mm
Patch the $k_n$-G-covers $f_n$ and $g_n$ over the complete valued field $k_n((u))$ ($n\geq 1$). The resul\-ting cover is a G-cover defined over $k_n((u))$, of group $\langle H,K\rangle$. Furthermore, from Theorem \ref{thm:cau} and Remark \ref{rem:equiv-char}, the resulting cover lies in some component  $x^{\alpha_n} y^{\beta_n}$ for some $\alpha_n,\beta_n\in G$. Then for two distinct integers $n, m\geq 1$, we have $(\alpha_n,\beta_n)=(\alpha_m,\beta_m) = (\alpha,\beta)$. By construction, the component $x^{\alpha} y^{\beta}$ has rational points over the fields $k_n((u))$, $k_m((u))$ and $\overline k$. Thus the component $x^{\alpha} y^{\beta}$ is defined over $\overline k \cap 
k_n((u)) \cap k_m((u))=k$. So is $x \hskip 1pt y^{\beta \alpha^{-1}}$.

\section{Recent advances II: Rational points over finite fields} \label{sec:finite-fields}

Investigating the growth of the number of components of Hurwitz spaces
when the branch point number $r$ tends to $\infty$ has been another fruitful development.

The Ellenberg-Venkatesh-Westerland result on the stability of homology of Hurwitz spaces was 
a breakthrough, notably towards the study of rational points on Hurwitz spaces over finite fields. 
The application to the Cohen-Lenstra heuristics for function fields over finite fields is quite striking.
Section \ref{ssec:EVW-stability} is devoted to their work. 

Their approach has then led to 
significant progress on the Malle conjecture  for function fields over finite fields. Section \ref{ssec: Malle} 
is devoted to this. Section \ref{ssec:counting components} concludes this paper with a general count of components when the basic Ellenberg-Venkatesh-Westerland \emph{non-splitting assumption} 
guaranteeing  boundedness of the number of components is removed, and with some related comments.

\subsection{Ellenberg-Venkatesh-Westerland approach} \label{ssec:EVW-stability}

\subsubsection{The homology stability result}
Fix a finite group $G$ and a conjugacy class $c$ of $G$ that generates $G$. 
Assume that this {{\it non-splitting property}} holds: 
\vspace{1mm}

\noindent
(Non-Split) {\it For every subgroup $H \subset G$, if $c\cap H\not= \emptyset$, then $c\cap H$ is a conjugacy class of $H$.} 
\vskip 1mm

\noindent
That is the case for example if $G$ is of order $2s$ with $s$ odd and $c$ is the involution class.

Consider the Hurwitz spaces ${\rm Hur}_{r,G}(c^r)$ of G-covers  with $r\geq 2$ branch points and branch cycles in $c$. We wish to investigate the number of components in ${\rm Hur}_{r,G}(c^r)$ when $r$ grows. A significant result from \cite{EVW2016} is this stabilization result.

\begin{theorem} \label{thm:EVW-main}
There exists a positive integer $u$ such that the number of com\-po\-nents of Hurwitz spaces ${\rm Hur}_{r,G}(c^r)$ becomes $u$-periodic as $r$ gets suitably large. The same is true with dimensions of higher homology groups of ${\rm Hur}_{r,G}(c^r)$.
\end{theorem}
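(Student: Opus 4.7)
The plan is to bundle the homology of all the Hurwitz spaces into a single graded ring and reduce $u$-periodicity of Betti numbers to an eventual-invertibility statement for a distinguished element. Concretely, I would form
$$R = \bigoplus_{r\geq 0} H_*({\rm Hur}_{r,G}(c^r);\Qq)$$
with multiplication coming from the natural side-by-side maps ${\rm Hur}_{r,G}(c^r)\times {\rm Hur}_{s,G}(c^s) \longrightarrow {\rm Hur}_{r+s,G}(c^{r+s})$ obtained by gluing two marked covers whose branch loci lie in disjoint half-planes; this lifts the concatenation of components to the level of the underlying spaces. The non-splitting hypothesis is used to ensure that the monodromy type behaves well under concatenation, so that $R$ is a genuine graded ring concentrated on the Hurwitz spaces of interest. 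I would then single out a central element $U \in R$ of some degree $d$, realized as a sum, over appropriate braid orbits, of Harbater--Mumford-type components whose stable lifting invariants are trivial; such components exist by the Conway--Parker--Fried--V\"olklein theorem applied in each relevant lifting-invariant coset.

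For the zeroth homology the result is essentially already contained in Theorem~\ref{thm:CPFV} and Remark~\ref{rem:CPFV}. Once $r$ is large enough that each conjugacy class in $c$ appears at least $M_{G,c}$ times, the lifting invariant identifies the set of components of ${\rm Hur}_{r,G}(c^r)$ with a coset of $H_2(G,c)$ inside $U(G,c)$ cut out by the condition $\hat g_1\cdots \hat g_r = 1$ in $G$. The allowed coset depends only on $r \bmod u$ for a fixed integer $u$ (the order of the image of a generator of $c$ in the abelianization $U(G,c)^{\rm ab}/H_2(G,c)$), and the fiber has fixed cardinality $|H_2(G,c)|$ whenever nonempty. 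Hence the number of components is $u$-periodic in $r$ once $r$ is sufficiently large. The higher-homology statement is the deeper assertion: I would prove that multiplication by $U$,
$$U \cdot : H_i({\rm Hur}_{r,G}(c^r);\Qq) \longrightarrow H_i({\rm Hur}_{r+d,G}(c^{r+d});\Qq),$$
is an isomorphism once $r$ is sufficiently large compared to $i$. Iterating gives $d$-periodicity of each Betti number in each fixed degree; combining with the $H_0$ analysis upgrades the period to $u$.

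The core obstacle, and the hard part of the argument, is this stability isomorphism for $U\cdot$. I would attack it through a simplicial arc (or tethering) resolution on ${\rm Hur}_{r+d,G}(c^{r+d})$ whose $p$-simplices parametrize systems of $p+1$ disjoint properly embedded arcs peeling $d$ branch points at a time off the rest of the configuration. The associated spectral sequence has $E^1$-page built from lower Hurwitz-space homology together with the multiplication-by-$U$ map, so vanishing of its abutment in a suitable range forces $U\cdot$ to be an isomorphism in that range. This reduction rests on a strong connectivity estimate for the arc complex, and it is precisely here that non-splitting is needed: it guarantees that the pushed-off branch points can be re-arranged without leaving the stratum of surjective monodromy, so that the arc complex behaves like the classical arc complex on a disk with $r+d$ marked points, which is known to be roughly $(r-d)/2$-connected. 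Without non-splitting, pushed-off branch points could land in genuinely different Nielsen strata and the connectivity estimate would break. Once this connectivity bound is secured, the remainder of the argument is essentially formal, in the style of Quillen's classical homological stability proofs.
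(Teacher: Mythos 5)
Your overall architecture --- the graded homology ring built from concatenation, a central element $U$, and eventual bijectivity of multiplication by $U$ with the period $u$ read off from $\deg U$ --- is exactly the skeleton of the Ellenberg--Venkatesh--Westerland argument that the paper sketches. But your $H_0$ analysis has a concrete gap: you identify ``the set of components of ${\rm Hur}_{r,G}(c^r)$'' with a coset of $H_2(G,c)$ via the lifting invariant, which only accounts for components parametrizing covers with monodromy group equal to all of $G$. The Hurwitz spaces here (as in \cite{EVW2016} and in the paper's relaxed definition of G-covers) also contain components whose covers have monodromy group a proper subgroup $H\subset G$, and it is precisely for these that the non-splitting hypothesis is indispensable at the level of $b_0$: if $c\cap H$ splits into several $H$-classes, the number of such components grows polynomially in $r$ of degree $\Omega_c(H)$ (Theorem~\ref{thm:beranger-scheme}), and no periodicity statement can hold. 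So non-splitting is not merely a technical convenience for an arc-complex connectivity bound --- it is what makes the component count bounded at all. You would need to run the Conway--Parker-type analysis for every subgroup $H$ meeting $c$ (where non-splitting guarantees $c\cap H$ is a single $H$-class) and sum the eventually periodic contributions; as written, your count is wrong whenever $G$ has proper subgroups generated by elements of $c$. The paper's own route to $b_0$-stability on the connected stratum is different and more elementary: the braid lemma (Lemma~\ref{lem:basicEVW}) shows that multiplication by $[g]$ is surjective on components of connected covers, and finiteness of each ${\rm Comp}^C_{r,G}(c^r)$ then forces eventual bijectivity; the subgroup strata are then absorbed into the statement that $U$-multiplication on the full ring of components has finite-dimensional kernel and cokernel.

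For the higher homology, your arc-complex/tethering resolution with a Quillen-style connectivity estimate is a genuinely different strategy from the one in \cite{EVW2016}, which deduces stability of $H_i$ by an induction on homological degree driven by the purely algebraic finiteness properties of $U$-multiplication on the ring of components (this is where the braid lemma and non-splitting are consumed). Your plan is in the spirit of later cellular approaches, and it is plausible, but the entire weight rests on the asserted connectivity of the arc complex of Hurwitz-decorated configurations, which you state without proof and which is the actual hard content of any such argument; the claim that non-splitting makes this complex ``behave like the classical arc complex on a disk'' is exactly the point that needs justification, not a justification. As it stands the proposal is a reasonable research plan for the higher-homology half and an incomplete argument for the $b_0$ half.
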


\subsubsection{Stability of number of components} The whole proof of Theorem \ref{thm:EVW-main} is out of the scope of this paper, but the following starting arguments may give some flavor of it. The proof rests on a pure braid lemma \cite[Proposition 3.4]{EVW2016}.

\begin{lemma}\label{lem:basicEVW}
Let $g\in c$. For every suitably large integer $r$, every $r$-tuple $(g_1,\ldots,g_r)\in c^r$ such that $\langle g_1,\ldots,g_r \rangle = G$ is equivalent under the braid action of $B_r$ to a $r$-tuple $(g,g^\prime_2,\ldots, g^\prime_r)$ such that $\langle g^\prime_2,\ldots, g^\prime_r\rangle = G$.
\end{lemma}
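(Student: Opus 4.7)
The plan combines braid manipulation with a pigeonhole argument on the multiset of entries. Throughout, every braid move $Q_i$ preserves each entry's $G$-conjugacy class (so all entries stay in $c$) and the total product $g_1 \cdots g_r$; these two invariants frame all subsequent moves.

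\emph{Step 1 (round-trip braids).} A key building block is the ``round-trip'' braid
\[
\beta_j := (Q_1 Q_2 \cdots Q_{j-1}) \, Q_j^2 \, (Q_{j-1}^{-1} \cdots Q_1^{-1}),
\]
whose effect on $(g_1,\ldots,g_r)$ is to conjugate the first entry by a specific word in $g_2,\ldots,g_{j+1}$ while restoring all other entries to their original values and positions. By composing round trips $\beta_{j_1}, \beta_{j_2}, \ldots$, the braid action can conjugate the first entry by an arbitrary element of $\langle g_2,\ldots,g_r\rangle$.

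\emph{Step 2 (placing $g$ at position 1).} If $\langle g_2,\ldots,g_r\rangle = G$, Step 1 immediately makes every $G$-conjugate of $g_1$ available as a first entry, in particular $g$ (since $g,g_1\in c$). Otherwise the subgroup $H := \langle g_2,\ldots,g_r\rangle$ is proper; since $\langle g_1,\ldots,g_r\rangle = G$, one must first rearrange the tuple. Here I would use a pigeonhole: for $r > |c|$ some value of $c$ is repeated among the $g_i$, and braiding brings two equal entries to adjacent positions, creating extra flexibility (a pair of equal entries commutes trivially with $Q_i$ acting on them). Iteratively exchanging the ``distinguished'' first entry with other entries via such paired moves, I reach a configuration in which the subgroup generated by the tail is all of $G$. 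The Non-Split hypothesis enters crucially here: it guarantees that the conjugates of $g_1$ reachable by the \emph{restricted} braid action of the other entries are not trapped in a proper sub-class of $c$ inside some intermediate subgroup, so that the target $g$ is actually reached.

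\emph{Step 3 (ensuring $\langle g_2',\ldots,g_r'\rangle = G$).} After Step 2, I have a braid-equivalent tuple $(g, g_2', \ldots, g_r')$ generating $G$. If the tail already generates $G$, we are done. Otherwise, another pigeonhole (valid once $r$ exceeds a constant depending on $|G|$) forces a second occurrence of $g$ in the multiset, and braiding a copy of $g$ into the tail gives $g \in \langle g_2',\ldots,g_r'\rangle$; combined with $\langle g, g_2',\ldots,g_r'\rangle = G$ this yields $\langle g_2',\ldots,g_r'\rangle = G$.

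\emph{Main obstacle.} The delicate point is coordinating Steps 2 and 3 simultaneously: the braid manipulations that place $g$ at position 1 tend to disrupt the generation property of the tail, and vice versa. The Non-Split assumption is exactly what rigidifies the braid orbit enough to carry out both goals at once, by forcing all elements of $c$ inside any relevant subgroup to be conjugate there. Making ``suitably large'' quantitative would require tracking the length of the words in $g_2, \ldots, g_r$ that express the conjugating elements, together with the number of $G$-conjugacy classes arising as intermediate subgroups.
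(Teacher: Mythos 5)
There is a genuine gap, and it sits at the heart of your Step 1. You correctly observe at the outset that every braid move preserves the total product $g_1\cdots g_r$, but the ``round-trip braid'' you then build is claimed to conjugate the first entry by a word $w$ while restoring all other entries exactly. That is impossible unless $w$ centralizes $g_1$: if $g_2,\ldots,g_r$ are unchanged and $g_1$ becomes $wg_1w^{-1}$, the product becomes $wg_1w^{-1}g_2\cdots g_r\neq g_1g_2\cdots g_r$ in general. A direct computation confirms this: $Q_1Q_2^2Q_1^{-1}$ restores the second entry but conjugates the third by $g_2^{-1}g_1g_2g_3$, and $Q_j^2$ itself conjugates \emph{both} entries $j$ and $j+1$ by their product. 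Since Steps 2 and 3 are built on this false primitive, the argument does not go through. The idea you are missing is exactly the one the paper's proof is organized around: take $r>d\,\card(c)$, where $d$ is the common order of the elements of $c$, so that by pigeonhole some entry $g'$ is repeated at least $d+1$ times; gather those copies into a block. A sub-block of $d$ copies has trivial product $(g')^d=1$, so it can be transported past anything without conjugating it, and this is what lets one conjugate the block of $g'$'s by $h_1\cdots h_k$, by the partial products $h_1\cdots h_j$, hence by each $h_j$ and by any element of $\langle h_1,\ldots,h_k\rangle$ --- all without violating the product invariant. The tuple becomes $(g,\ldots,g,h_1,\ldots,h_k)$ with $d+1\geq 2$ copies of $g$ in front, so the tail automatically contains $g$ together with all the $h_i$ and therefore generates $G$; no separate Step 3 is needed (and your pigeonhole there, which asserts a second occurrence of the \emph{specific} element $g$, would not be justified anyway).

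Two further points of comparison. First, you assign a ``crucial'' role to the non-splitting hypothesis in making the lemma work; in the paper this lemma is a purely braid-theoretic statement proved without (Non-Split), which only enters later, in the construction of the central element $U$ with finite-dimensional kernel and cokernel of multiplication. Second, your instinct to conjugate a distinguished entry by elements of the subgroup generated by the remaining entries is the right one --- it is precisely what the paper does --- but it can only be implemented on a block whose product is controlled (here, trivial), which is why the order $d$ and the threshold $r>d\,\card(c)$ are not technical decoration but the engine of the proof.
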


Denote by ${\rm Comp}^C_{r,G}(c^r)$ the set of components of the space ${\rm Hur}_{r,G}(c^r)$ of \emph{connected to\-po\-logical G-covers of the affine line $\Aa^1(\Cc)$}.\footnote{\cite{EVW2016} starts with covers of $\Aa^1(\Cc)$.  In this subsection (only), we slightly abuse notation to use the same notation for this affine situation (which in addition we have not formally introduced).}  Lemma \ref{lem:basicEVW} shows that, for $g\in c$, the application 
\vskip 1mm

\centerline{${\rm Comp}^C_{r,G}(c^r) \rightarrow {\rm Comp}^C_{r+1,G}(c^{r+1})$}
\vskip 1mm

\noindent
mapping a component $x$ of degree $r$ (\hbox{i.e.} with $r$ branch points) to the degree $r+1$ component $ [g] x$ is surjective. As the number of components for each fixed $r$ is finite, this application becomes a bijection for suitably big $r$: \emph{the zeroth Betti number of the subset of  ${\rm Hur}_{r,G}(c^r)$
of \emph{connected} covers stabilizes as $r$ grows.}

\begin{proof}[Proof of Lemma \ref{lem:basicEVW}] The proof goes via the following simple braid argument. We use the symbol ``$\sim$'' for braid equivalence.

Assume that $r > d \hskip 2pt {\rm card}(c)$ where $d$ is the order of elements of $c$.
Then at least one of $g_1,\ldots,g_r$, say $g^\prime$, appears at least $d+1$ times 
in $\underline g= (g_1,\ldots,g_r)$. Then we have:

\vskip 0,5mm

\hskip 49mm ${g^\prime}$

\vskip -1,5mm

\hskip 49mm {\tiny $||$}
\vskip -1mm

\noindent
 \hskip 29mm $\underline g \sim (g^\prime,\ldots,g^\prime, h_1,\ldots,h_k)$ \hskip 18mm {\small (with $g^\prime$ repeated $d+1$ times)}
\vskip 1mm

\hskip 27mm $\sim (h_1,\ldots,h_k, g^\prime,\ldots,g^\prime)$ \hskip 18mm {\small (as $(g^\prime)^d = 1$)}
\vskip 1mm

\hskip 27mm $\sim (h g^\prime h^{-1},\ldots,h g^\prime h^{-1}, h_1,\ldots,h_k)$ \hskip 2mm {\small with $h=h_1\cdots h_k$.}

\vskip 1,5mm

\noindent
Note next that the same is true with $h=h_1\cdots h_{k-1}$ instead of $h=h_1\cdots h_k$, so also with $h=h_k$, with $h=h_{k-1}$ too, etc. Conclude that $\underline g \sim (\gamma,\ldots,\gamma,h_1, \ldots, h_k)$ for $\gamma$ any conjugate of $g^\prime$ by an arbitrary element of $\langle h_1,\ldots,h_k\rangle = G$, in particular for $\gamma = g$.
\end{proof}

Set ${\rm Comp}_{G}^C(c) = \bigcup_{r\geq 1} {\rm Comp}_{r,G}^C(c^r)$. A clever but elementary argument next uses Lemma \ref{lem:basicEVW} to deduce that there is a central homogeneous element $U$ in the sub-graded ring $k[{\rm Comp}_{G}^C(c)]$ of $k[{\rm Comp}_G]$, of positive degree and such that   multiplication by $U$
\vskip 1mm

\centerline{$\rho \in k[{\rm Comp}_{G}(c)] \hskip 2mm \mapsto \hskip 2mm  U \rho \in k[{\rm Comp}_{G}(c)]$}
\vskip 1mm

\noindent
has finite $k$-dimensional kernel and cokernel. This part of the proof uses in an essential way the non-splitting assumption; see \cite[Proposition 3.4]{EVW2016}. This element $U\in k[{\rm Comp}_{G}(c)]$ is an important tool for the rest of the proof of Theorem  \ref{thm:EVW-main}; in particular, the number of com\-po\-nents of Hurwitz spaces ${\rm Hur}_{r,G}(c^r)$ is $u$-periodic for $u=\deg(U)$.

\subsubsection{Points over finite fields} 

Let $\Ff_q$  be a finite field of characteristic $p$. Denote by $X_r$ the (possibly reducible) $\Ff_q$-variety
$X_r= \mathcal{H}_p(c^r)\otimes_{\Ff_p} \Ff_q$ where $\mathcal{H}_p(c^r) = \mathcal{H}ur_{r,G}(c^r) \otimes_{\Zz[1/|G|]} {\Ff_p}$ 
(as in Theorem \ref{thm:wewers-romagny}). 
Fix an odd prime $\ell\not=p$. The  Grothendieck-Lefschetz fixed point formula gives
\vskip 0,5mm 

\centerline{{$\displaystyle \# X_r(\Ff_q) = \sum_{i=0}^{2r} (-1)^i \hskip 1mm \hbox{Tr}(\hbox{Frob},H^i_c(X_r \otimes_{\Ff_q}\overline{\Ff_q}, \Qq_\ell))$,}} 
\vskip 1mm

\noindent
where  $\hbox{Tr}(\hbox{Frob},H^i_c(X_r \otimes_{\Ff_q}\overline{\Ff_q}, \Qq_\ell))$ is the trace of the Frobenius acting on the compactly supported \'etale $i$-th cohomology group.
The main term in the sum is for {$i =2r$}: it is 
\vskip 1mm

\centerline{$q^{r}$ $\times$ $\#\{\hbox{components of}\hskip 2pt X_r\hskip 2pt  \hbox{defined over } \Ff_q\}$.}
\vskip 1mm

\noindent
For suitably big $r$, this term is here $q^r$; as shown in the proof \cite[Theorem 8.8]{EVW2016} (see pp.42-43), $X_r$ has only one component defined over $\Ff_q$.\footnote{The issue of counting components defined over $\Ff_q$ is more generally investigated by Liu-Wood-Zureick-Brown \cite{LiuWoodZureickBrown}; see in particular Theorem 10.4 there.}
Use then the Deligne bound for the eigenvalues of the Frobenii: they are of module
\vskip 1mm

\centerline{$\leq \sqrt{q^{2r-i}}$ \hskip 3mm {\footnotesize (for $(2r-i)$-th term).}}
\vskip 1mm

\noindent
The next step consists in bounding the $\Qq_\ell$-dimensions of the cohomology groups. Using the Poincar\'e duality, it follows from the stability of singular homology groups (under the \hbox{non-splitting} hypothesis) that we have an upper bound of the form
\vskip 1mm

\centerline{$\leq C^i$  \hskip 4mm{\footnotesize (for $(2r-i)$-th term) for some $C=C(G,c)$;} }
\vskip 1mm

\noindent
This leads to the following result, stated in the proof of \cite[Theorem 8.8]{EVW2016}.

\begin{theorem} Let $A$ be a nontrivial finite abelian $\ell$-group,   
$G= A\rtimes \Zz/2\Zz$,  and $c$ the conjugacy class of $G$ of involutions. For some constant $C$ and all suitably large $r$ and $q$ (relative to $A$), we have:
$$\left| \frac{\# {\mathcal{H}ur}_{r,G}(c^r)({\Ff_q})}{q^r} - 1 \right| \leq  \frac{C}{\sqrt{q}}$$
\end{theorem}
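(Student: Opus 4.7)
The plan is to apply the Grothendieck--Lefschetz trace formula to the smooth $r$-dimensional $\Ff_q$-variety $X_r = \mathcal{H}_p(c^r) \otimes_{\Ff_p} \Ff_q$ and to control every compactly supported $\ell$-adic cohomology group by combining Deligne's purity bound with the Ellenberg--Venkatesh--Westerland stability of Betti numbers. The top-degree term $i = 2r$ should contribute exactly $q^r$, while the remaining terms should sum to $O(q^{r-1/2})$, yielding the stated inequality after division by $q^r$.

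First I would verify that $(G,c)$ satisfies the non-splitting hypothesis required for Theorem \ref{thm:EVW-main}. Writing elements of $G = A \rtimes \Zz/2\Zz$ as pairs $(a,s)$ with $s \in \{0,1\}$ acting on $A$ by inversion, every $(a,1)$ is an involution, so $c = A \times \{1\}$, and conjugation by $(b,0)$ sends $(a,1)$ to $(a + 2b,1)$. Since $\ell$ is odd, multiplication by $2$ is bijective on every subgroup of $A$. Given $H \subset G$ meeting $c$, choose $(a_0,1) \in H \cap c$ and set $A' := H \cap A$; then $H \cap c = (a_0 + A') \times \{1\}$, and the formula above shows that $H$-conjugation by elements of $A'$ acts transitively on this set. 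Hence $H \cap c$ is a single $H$-conjugacy class, and (Non-Split) holds.

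Next I would isolate the main term. Theorem \ref{thm:EVW-main} gives $u$-periodicity of the number of geometric components of ${\rm Hur}_{r,G}(c^r)$, and the Galois-theoretic refinement carried out in the proof of \cite[Theorem 8.8]{EVW2016} shows that, for $r$ and $q$ large enough relative to $|A|$, exactly one such component is defined over $\Ff_q$. The $i = 2r$ term in the Grothendieck--Lefschetz formula then contributes $q^r$. For the remaining terms, Poincar\'e duality on the smooth $r$-dimensional $X_r$ gives $\dim H^{2r-i}_c = \dim H^i$, and the EVW stabilization machinery (under (Non-Split)) yields a uniform bound $\dim H^i(X_r \otimes \overline{\Ff_q}, \Qq_\ell) \leq C_0^i$ with $C_0$ depending only on $(G,c)$. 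Combined with Deligne's bound $|\alpha| \leq q^{(2r-i)/2}$ on the Frobenius eigenvalues on $H^{2r-i}_c$, the total contribution of the terms with $i \geq 1$ is at most
\[
\sum_{i \geq 1} C_0^i \, q^{r - i/2} \;=\; q^r \sum_{i \geq 1} \left( \frac{C_0}{\sqrt{q}} \right)^i \;\leq\; \frac{C \cdot q^r}{\sqrt{q}}
\]
as soon as $q > 4 C_0^2$. Dividing by $q^r$ yields the claimed estimate.

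The hardest step is the uniform exponential Betti bound $\dim H^i \leq C_0^i$ independently of $r$. This is precisely where one must invoke the central homogeneous element $U$ of the ring of components $k[{\rm Comp}_G^C(c)]$ whose construction relies crucially on (Non-Split), together with a sufficiently quantitative form of the EVW stability theorem. Ensuring that both the stabilization threshold and the constant $C_0$ depend only on $(G,c)$---and not on $r$ or $q$---is what allows the final bound to be uniform in $r$ and $q$.
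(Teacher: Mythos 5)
Your proposal follows essentially the same route as the paper: Grothendieck--Lefschetz on $X_r$, the identification of the $i=2r$ term as $q^r$ times the number of $\Ff_q$-rational components (equal to one for large $r,q$ by \cite[Theorem 8.8]{EVW2016}), Deligne's purity bound, and Poincar\'e duality combined with the EVW homological stability bound $\dim H^i \leq C_0^i$ to control the lower-order terms by a geometric series. Your explicit verification of the non-splitting hypothesis for $G=A\rtimes \Zz/2\Zz$ and the summation $\sum_{i\geq 1} C_0^i q^{r-i/2} = O(q^{r-1/2})$ are correct fill-ins of steps the paper leaves implicit.
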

\vspace{0mm}

\subsubsection{From Hurwitz Spaces to Cohen-Lenstra} The paper \cite{EVW2016} has a main arithmetic motivation: to prove,  in the context of function fields over  finite fields, the Cohen-Lenstra heuristics on the distribution of class groups of quadratic extensions among all finite abelian $\ell$-groups (originally conjectured for number fields). The specific result they obtain is the following asymptotic result (which is  \cite[Theorem 1.2]{EVW2016}) about quadratic extensions $L/\Ff_q(t)$ given with a surjection ${\rm Cl}_L \rightarrow A$ of the class group of $L$ onto $A$.

\begin{theorem}\label{thm:EVW2016} Let $\ell > 2$ be prime and $A$ be a finite abelian $\ell$-group. 
Write $\delta^+$ (resp. $\delta^-$) for the upper  density (resp. lower density) of  quadratic extensions of $\Ff_q(t)$ ramified at $\infty$ and for which the $\ell$-part of the class group is isomorphic to $A$. Then $\delta^+(q)$ and $\delta^-(q)$ converge as $q \rightarrow \infty$ with $q\not=1 \pmod{\ell}$ to 

\vskip 1mm

\centerline{$\displaystyle \prod_{i\geq 1} (1-\ell^{-i}) / |{\rm Aut}(A)|$.}
\end{theorem}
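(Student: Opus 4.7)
\textbf{Proof proposal for Theorem \ref{thm:EVW2016}.}

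The plan is to parametrize pairs $(L, \psi)$ consisting of a quadratic extension $L/\Ff_q(t)$ ramified at $\infty$ and a surjection $\psi: {\rm Cl}_L \twoheadrightarrow A$ by $\Ff_q$-rational points on the Hurwitz spaces $\mathcal{H}ur_{r,G}(c^r)$ of the preceding theorem (with $G = A \rtimes \Zz/2\Zz$, the action by inversion, and $c$ the conjugacy class of involutions), then apply the point-count, and finally invert to recover the density of a given isomorphism type via a Cohen-Lenstra moment formula. The dictionary is classical: by Artin reciprocity a surjection $\psi$ corresponds to an unramified Galois $A$-extension $M/L$; the action of ${\rm Gal}(L/\Ff_q(t))$ on $A = {\rm Gal}(M/L)$ is by inversion (a reflection argument using that $L$ is ``imaginary'', i.e.\ ramified at $\infty$), making $M/\Ff_q(t)$ Galois with group $G$, ramified exactly over the branch locus of $L/\Ff_q(t)$, with inertia generators in $c$. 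Up to standard bookkeeping (markings, normalizations, fibers over configurations), this identifies $\mathcal{H}ur_{r,G}(c^r)(\Ff_q)$ with the set of pairs $(L, \psi)$ for $L$ with $r$ branch points in $\Pp^1(\Ff_q)$.

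Applying the preceding theorem gives $\#\mathcal{H}ur_{r,G}(c^r)(\Ff_q) = q^r \bigl(1 + O_G(q^{-1/2})\bigr)$ for all sufficiently large $r$ and $q$. Dividing by the count $\sim q^{r-1}$ of quadratic extensions of $\Ff_q(t)$ ramified at $\infty$ with $r$ total branch points (parametrized by squarefree monic polynomials of degree $r-1$) then yields
$$
\mathop{\mathrm{Avg}}_{L \,:\, r(L) = r} \# {\rm Surj}({\rm Cl}_L, A) \;\longrightarrow\; \frac{1}{|{\rm Aut}(A)|} \qquad (q \to \infty, \; q \not\equiv 1 \pmod{\ell}),
$$
the $|{\rm Aut}(A)|$ in the denominator accounting for the fact that every isomorphism of the $\ell$-part of ${\rm Cl}_L$ with $A$ gives rise to exactly $|{\rm Aut}(A)|$ surjections.

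To conclude, I would invoke the Cohen-Lenstra moment inversion: a probability distribution $\mu$ on isomorphism classes of finite abelian $\ell$-groups whose ``$A$-moments'' $\sum_H \mu(H)\cdot \#{\rm Surj}(H,A)$ all equal $1/|{\rm Aut}(A)|$ is uniquely the Cohen-Lenstra distribution
$$
\mu(H) \;=\; \frac{1}{|{\rm Aut}(H)|}\prod_{i \geq 1}(1 - \ell^{-i}).
$$
Applied to the empirical distribution of the $\ell$-part of ${\rm Cl}_L$ as $L$ varies, this recovers the limit claimed in the theorem, the infinite product appearing as the total mass forced by the moment data. Since both $\delta^+(q)$ and $\delta^-(q)$ are controlled by the same moment asymptotic, they must converge to this common value.

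The main obstacle is interchanging the limit $q \to \infty$ with the infinite moment inversion: the Lefschetz point-count is a \emph{per-$r$} asymptotic, while Cohen-Lenstra inversion uses all $r$ and all $A$ at once. What is needed is an effective error in the point-count uniform in $r$, and tails uniform in $A$. This is supplied precisely by the EVW stability theorem (Theorem \ref{thm:EVW-main}) combined with Deligne's purity: the Betti numbers of $\mathcal{H}ur_{r,G}(c^r)$ grow at most geometrically in $r$, so the lower-order Lefschetz terms are dominated by $C^{i} q^{(2r-i)/2}$. The non-splitting hypothesis required for stability holds automatically because $c$ is the full involution class of $A \rtimes \Zz/2\Zz$, and the congruence $q \not\equiv 1 \pmod{\ell}$ is exactly what forces only one component of $\mathcal{H}ur_{r,G}(c^r)$ to be $\Ff_q$-rational, making the leading point-count coefficient equal to $1$ rather than a larger integer multiple.
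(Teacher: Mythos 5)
Your overall architecture is exactly the paper's (which itself follows \cite{EVW2016}): identify $\mathcal{H}ur_{r,G}(c^r)(\Ff_q)$ with pairs $(L,\alpha)$ via Artin reciprocity and the inversion action, feed in the point count from the preceding theorem, normalize by Carlitz's count of quadratic extensions, and finish by Cohen--Lenstra moment inversion. Your remarks on non-splitting for $A\rtimes\Zz/2\Zz$ and on the role of $q\not\equiv 1\pmod{\ell}$ in isolating a single $\Ff_q$-rational component are also in line with the paper.

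There is, however, a concrete quantitative error in the middle of your argument. The quadratic extensions matched with $\mathcal{H}ur_{r,G}(c^r)(\Ff_q)$ are those of discriminant degree $r+1$ ramified at $\infty$, i.e.\ with $r$ \emph{finite} branch points; by Carlitz their number is $q^r-q^{r-1}\sim q^r$, not $\sim q^{r-1}$. With your normalization the displayed ratio is $q^r\bigl(1+O(q^{-1/2})\bigr)/q^{r-1}=q\bigl(1+O(q^{-1/2})\bigr)$, which diverges rather than converging to $1/|\mathrm{Aut}(A)|$. The correct conclusion of this step is that the \emph{average number of surjections} $\mathrm{Cl}_L\twoheadrightarrow A$ tends to $1$, which is what the paper records as $\bigl|\#\mathcal{M}_A/(q^r-q^{r-1})-1\bigr|\leq C(A)/\sqrt{q}$. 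Consequently your moment-inversion lemma is also misstated: the Cohen--Lenstra distribution $\mu(H)=\prod_{i\geq 1}(1-\ell^{-i})/|\mathrm{Aut}(H)|$ is the unique distribution whose surjection-moments $\sum_H\mu(H)\,\#\mathrm{Surj}(H,A)$ all equal $1$ (a distribution with all such moments equal to $1/|\mathrm{Aut}(A)|$ is a different object; already for $A=\Zz/\ell$ the two disagree). The factors $1/|\mathrm{Aut}(A)|$ and $\prod_{i\geq1}(1-\ell^{-i})$ appear only in the resulting density of the isomorphism class, not in the moments themselves; your parenthetical about each isomorphism giving $|\mathrm{Aut}(A)|$ surjections is valid only on the event $\mathrm{Cl}_L[\ell^\infty]\cong A$ and cannot be used to rescale the full moment. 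Once these two points are corrected the argument goes through as in the paper; your closing caveat about uniformity in $r$ and $A$ when interchanging $q\to\infty$ with the inversion is well taken and is indeed why the theorem is phrased in terms of upper and lower densities.
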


\noindent
In particular, for all suitably large $q$ (relative to $A$), a positive fraction of quadratic extensions  
$L/\Ff_q(t)$ ramified at $\infty$ have the $\ell$-part of their class group isomorphic to $A$.

\begin{proof}[The Hurwitz space argument] We focus on the part of the proof involving Hurwitz spaces. 
Let ${\mathcal M}_A$ be the set of isomorphism classes of pairs  $(L, \alpha)$ (for appropriate isomorphisms), where $L/\Ff_q(t)$ is a quadratic extension of discriminant degree $r+1$ ramified at $\infty$, and $\alpha$ a surjective homomorphism $\alpha:{\rm Cl}_L \rightarrow A$. \cite[Prop.8.7]{EVW2016} 
produces a bijection 
\vskip 1mm
\centerline{${\mathcal{H}ur}_{r,G}(c^r)({\Ff_q}) \rightarrow {\mathcal M}_A$.}
\vskip 1mm

\noindent
Using a result of Carlitz, one obtains that the total number of quadratic extensions of discriminant degree $r+1$ ramified at $\infty$ is $q^r-q^{r-1}$. This leads to

\vskip 2mm

\hskip 25mm $\displaystyle \left|\frac{\# {\mathcal M}_A}{q^r-q^{r-1}} -1 \right|  \approx \left| \frac{\# {\mathcal{H}ur}_{r,G}(c^r)({\Ff_q})}{q^r} - 1 \right| \leq  \frac{C(A)}{\sqrt{q}}$.
 \end{proof}

\subsection{Malle conjecture over function fields} \label{ssec: Malle} \cite{EVW2016} opened the way to several celebrated problems in arithmetic statistics. One of them is the Malle conjecture in inverse Galois  theory. This conjecture is a quantitative version of the Inverse Galois Problem: it predicts an estimate for the number of extensions of a global field such that the Galois closure has a given Galois group $G$ and the discriminant is bounded; only finitely many such extensions exist by a result of Hermite.

Specifically, fix a transitive subgroup $G\subset S_m$ and use the letter $K$ for $\Qq$ or $\Ff_q(T)$. 
Set

\vskip 2mm

\centerline{$N(K,G,X)= \# \left\{ E/K\hskip 1mm  \middle\vert \hskip -3mm{\scriptsize \begin{matrix} & \hbox{ of degree } m  \hfill \cr & \hbox{of Galois group } G \cr & |N_{K/\Qq}(d_{E/K})| \leq X \cr \end{matrix}}
\right\}$ }
\vskip 2mm

\noindent
The Malle conjecture predicts that $N(K,G,X)$ can be bounded from below and from above by some function of the form $C(K,G) \hskip 1pt X^{a(G)} \log X^{b(K,G)}$ for some constants $C(K,G)$, $a(G)$, $b(K,G)$, the last two being explicitly given in the conjecture ; see \cite{Ma} and \cite{Ma2}. A \emph{strong form} of Malle's conjecture predicts that $N(K,G,X)$ is \emph{asymptotic} to such a function $C(K,G) \hskip 1pt  X^{a(G)} \log X^{b(K,G)}$. The leading constant $C(K,G)$ was recently investigated by Loughran-Santens who put forward a specific conjectural value \cite{loughran-santens}.

Over $K=\Qq$, the Malle conjecture is known in few cases, each of them being a significant result: if $G$ is abelian (M\"aki \cite{maki}, Wright \cite{Wright} in the strong form), if $G=S_3$ (Davenport-Heilbronn \cite{Davenport-Heilbronn}), if $G=S_4,S_5 $ (Bhargava \cite{Bhargava2005,Bhargava2010}), if $G$ is nilpotent \cite{KluenersMalle}. See also \cite{alberts-lemke-wang-wood} for more recent contributions.

For $K=\Ff_q(T)$, the Ellenberg-Venkatesh-Westerland method has rapidly risen hopes that the approach via  $\Ff_q$-rational points on Hurwitz spaces (which correspond to finite extensions of $\Ff_q(T)$) should lead to some significant progress. However, one had to overcome the very stringent non-splitting assumption that appears in the Ellenberg-Venkatesh-Westerland method. This is what Landesmann and Levy have achieved \cite{landesman2025homologicalstabilityhurwitzspaces}. As a result of their work, combined with further recent works, the strong form of the Malle conjecture over $\Ff_q(T)$ is now established, except for some restrictions on $q$.

\begin{theorem} For every finite group $G$, there is a constant $q_0(G)$ such that for all prime powers $q\geq q_0(G)$, prime to $|G|$, and for $K=\Ff_q(T)$, the number $N(K,G,X)$ is asymptotic to $C(K,G) \hskip 1pt  X^{a(G)} \log X^{b(K,G)}$, where $a(G)$, $b(G,K)$ are the expected values for the exponents of $X$ and $\log X$ and the leading constant $C(K,G)$ is also explicit.
\end{theorem}

More precisely, a version with only the upper bound part was first obtained by Ellenberg-Tran-Westerland \cite{ellenberg2023foxneuwirthfukscellsquantumshuffle}. Then Landesmann and Levy, thanks to their breakthrough result on the stabilization of homology of Hurwitz spaces (parametrizing connected covers), could prove the  ``bounded from below and from above'' version in \cite{landesman2025homologicalstabilityhurwitzspaces}. Finally Santens proved the strong form stated above in \cite{santens2026leadingconstantmallesconjecture}, for the $\Ff_q(T)$-analog of the leading constant $C(K,G)$ that he and Loughran had put forward in \cite{loughran-santens}.

\begin{remark}[variants] 
The prime-to-$p$ assumption on $|G|$ is due to the use of Hurwitz spaces,
which only parametrize tame covers. The asymptotic distribution of wildly ramified extensions of function fields in characteristic $p \geq 2$ has been investigated by Gundlach-Seguin \cite{wildcount}, for certain $p$-groups of nilpotency class at most $2$. A key ingredient replacing Hurwitz spaces is Abrashkin's nilpotent Artin-Schreier theory.
In an earlier paper \cite{asympskew}, Gundlach-Seguin also considered the distribution of extensions of a given central simple algebra $K$ over a number field, thus extending the asymptotic issues \`a la Malle to the noncommutative context.
\end{remark}

\subsection{Splitting number and component count} \label{ssec:counting components}

In \cite{countcomp}, Seguin con\-si\-ders the growth of the number of components of Hurwitz spaces (parametrizing not necessarily connected covers)  when the non-splitting assumption is removed. 

Fix a group $G$ and let $c$ be a finite set of conjugacy classes of $G$.

\begin{definition}
Given a subgroup $H\subset G$, the \emph{splitting number}  $\Omega_c(H)$ of a subgroup $H\subset G$ is the difference between the number of conjugacy classes of $H$ contained in some class in $c$ and the number of conjugacy classes of $G$ in $c$. 
\end{definition}

The non-splitting assumption for $c$ a single conjugacy class means that $\Omega_c(H)=0$ for every subgroup $H\subset G$. As Seguin shows, the splitting number relates to the stratification of the scheme of components described in \S \ref{ssec:scheme-components}: $\dim(\gamma(H))=\Omega_c(H)+1$ for every subgroup $H\subset G$ \cite[Theorem 1.2]{countcomp}. The pertinence of the splitting number  is even more striking in \cite[Theorem 1.4]{countcomp} which counts the components of the Hurwitz space when the branch point number grows:

\begin{theorem} \label{thm:beranger-scheme}
Let $(\xi_C)_{C\in c}$ be a family of integers $\xi_C\geq 0$ indexed by $c$. Let ${\rm HF}_H(r)$ be the number of components of the Hurwitz space of {connected} $H$-covers of $\Pp^1$ such that, for each $C \in c$, each conjugacy class of $H$ in $C\cap H$ appears $r \xi_C$ times in the Inertia Canonical Invariant. Then the function ${\rm HF}_H$ is a $O(r^{\Omega_c(H)})$ and is not a $o(r^{\Omega_c(H)})$.
\end{theorem}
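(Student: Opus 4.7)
The strategy is to identify ${\rm HF}_H(r)$ with a Hilbert function of the multigraded coordinate ring of the stratum $\gamma(H) \subset {\rm Spec}(k[{\rm Comp}_G])$, and to leverage the dimension formula $\dim \gamma(H) = \Omega_c(H)+1$ (\cite[Theorem 1.2]{countcomp}, mentioned just above the statement) to extract both bounds from standard Hilbert-function theory.

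First I would fix the identification. The ring of components $k[{\rm Comp}_G]$ is commutative and multigraded by the free abelian monoid on the nontrivial conjugacy classes of $G$, the multidegree of a basis element $[\underline g]$ being its Inertia Canonical Invariant. Restricted to $\gamma(H)$, this multigrading refines naturally to one indexed by the $H$-conjugacy classes meeting $\bigcup_{C \in c}(C \cap H)$. Since the components themselves form the canonical $k$-basis of $k[{\rm Comp}_G]$, the quantity ${\rm HF}_H(r)$ is exactly the $k$-dimension of the multigraded piece of $k[\gamma(H)]$ in which, for each $C \in c$, every $H$-class contained in $C \cap H$ carries weight $r\xi_C$. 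The upper bound $O(r^{\Omega_c(H)})$ then follows from the classical fact that for a finitely generated multigraded $k$-algebra of Krull dimension $d+1$, the Hilbert function along any integer ray is bounded by a polynomial in $r$ of degree at most $d$; here $d = \Omega_c(H)$.

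For the matching lower bound, the plan is to exhibit $\gtrsim r^{\Omega_c(H)}$ pairwise non-braid-equivalent components of $\gamma(H)$ of the prescribed multi-degree. Starting from a seed component $x_0 \in \gamma(H)$ whose ICI is compatible with $(\xi_C)$, I would produce at rank $r$ an $\Omega_c(H)$-parameter family of distinct components by concatenating conjugates $x_0^{\gamma_i}$, in the spirit of Theorem \ref{thm:seguin2023}(c). The $\Omega_c(H)$ free parameters correspond exactly to the excess of $H$-classes over $G$-classes inside $\bigcup_C (C \cap H)$, which is the very definition of $\Omega_c(H)$; the Conway-Parker-Fried-V\"olklein theorem (Theorem \ref{thm:CPFV}) and the lifting-invariant calculus of \S \ref{ssec:proof-part(b)} then let one distinguish these components by their pairwise distinct lifting invariants.

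The principal obstacle is this lower bound. Equivalently, one must check that the ray $(\xi_C)$ lies in the interior of the effective cone of a top-dimensional irreducible component of $\gamma(H)$, so that the Hilbert function along it reaches the maximal degree $\Omega_c(H)$ allowed by the Krull dimension. Translating this scheme-theoretic requirement into an explicit combinatorial construction of enough pairwise distinct components, and ruling out unexpected collapses of lifting invariants, is where the interplay between the splitting of $G$-classes into $H$-classes and the fine geometry of $\gamma(H)$ developed in \cite{geomringcomp} becomes essential.
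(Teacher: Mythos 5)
The paper itself does not prove this statement: it is quoted from Seguin's work (\cite[Theorem 1.4]{countcomp}), with only the surrounding hints that ${\rm HF}_H$ is a Hilbert function of the graded ring of components and that $\dim\gamma(H)=\Omega_c(H)+1$. Your upper bound follows exactly that framing and is essentially sound, up to one technical adjustment: the span of the components of group \emph{exactly} $H$ is not a graded piece of the coordinate ring of the locally closed stratum $\gamma(H)$, but an ideal of the subring $k[{\rm Comp}(H,c\cap H)]$ whose support is $\overline{\gamma(H)}$; Hilbert--Serre applied to that module, together with $\dim\overline{\gamma(H)}=\Omega_c(H)+1$, gives the $O(r^{\Omega_c(H)})$ bound.

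The genuine gap is the lower bound, on two counts. First, the family you propose, namely products $x_0^{\gamma_1}\cdots x_0^{\gamma_n}$ of conjugates of a single seed, cannot be separated by lifting invariants: as recalled in \S\ref{ssec:proof-part(b)}, the lifting invariant is multiplicative and invariant under conjugation, so all these products share the invariant $L(x_0)^n$; moreover, by the Conway--Parker--Fried--V\"olklein theorem (Theorem \ref{thm:CPFV}), components with a fixed group and a fixed $H$-refined inertia canonical invariant are, for large $r$, classified by their lifting invariant inside a coset of the finite reduced Schur multiplier --- so any construction that fixes the refined ICI produces only $O(1)$ components, never $\asymp r^{\Omega_c(H)}$. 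Second, and relatedly, your reading of the grading (``every $H$-class in $C\cap H$ carries weight $r\xi_C$'') pins down the refined ICI completely, under which the count is eventually constant and the theorem could not hold for $\Omega_c(H)>0$; the statement must be read with $r\xi_C$ as the \emph{total} multiplicity of the $G$-class $C$. The polynomial growth then comes from the combinatorics you do not exploit: for each $C\in c$ one distributes $r\xi_C$ among the $n_C$ conjugacy classes of $H$ inside $C\cap H$, giving $\asymp r^{n_C-1}$ admissible refinements and hence $\asymp r^{\sum_C(n_C-1)}=r^{\Omega_c(H)}$ pairwise distinct refined ICIs; since the refined ICI is a component invariant and each refinement with all parts large is realized by at least one connected $H$-cover (by Conway--Parker--Fried--V\"olklein, or by concatenating suitable generators), this yields the required $\gtrsim r^{\Omega_c(H)}$ components with no recourse to lifting invariants.
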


When $H$ ranges over all possible subgroups of $G$, one obtains an estimate for the dimension of the $k$-subspace of  elements of given degree of the ring of components, i.e., for the {H}{\it ilbert} {F}{\it unction} of the graded ring.
The function ${\rm HF}_H$ is bounded if $\Omega_c(H)=0$, notably under the non-splitting assumption and obviously for $H=G$.

Following Seguin's paper \cite{countcomp}, the splitting number was also shown to naturally appear in higher homology  by Bianchi-Miller \cite{bianchi2024polynomialstabilityhomologyhurwitz}.

\bibliography{2025_NAKAMURA60}
\bibliographystyle{alpha}

\end{document}